\pgfplotsset{compat=1.15}
\theoremstyle{plain}
\newtheorem{thm}{Theorem}[section]
\newtheorem{theorem}[thm]{Theorem}
\newtheorem{corollary}[thm]{Corollary}
\theoremstyle{definition}
\newtheorem{definition}[thm]{Definition}
\newtheorem{remark}[thm]{Remark}
\newtheorem{notation}[thm]{Notation}
\newtheorem{example}[thm]{Example}
\newtheorem{claim}[thm]{Claim}
\newtheorem{question}[thm]{Question}
\DeclareMathOperator{\rank}{rank}
\def\PP{\mathbb{P}}
\def\vdim{\text{vdim}}
\def\K{\mathbb{K}}
\def\a0n{a_0,\ldots,a_N}
\def\x0n{x_0,\ldots,x_N}
\def\b0n{b_0,\ldots,b_N}
\def\y0n{y_0,\ldots,y_N}
\def\p0n{p_0,\ldots,p_N}
\newcommandx{\unsure}[2][1=]{\todo[linecolor=red,backgroundcolor=red!25,bordercolor=red,#1]{#2}}
\newcommandx{\change}[2][1=]{\todo[linecolor=Orange,backgroundcolor=orange!25,bordercolor=orange,#1]{#2}}
\newcommandx{\info}[2][1=]{\todo[linecolor=SkyBlue,backgroundcolor=Yellow!35,bordercolor=SkyBlue,#1]{#2}}
\newcommandx{\improvement}[2][1=]{\todo[linecolor=Plum,backgroundcolor=Plum!25,bordercolor=Plum,#1]{#2}}
\title{XXX}
\author{Marcin Dumnicki, Grzegorz Malara, Halszka Tutaj-Gasi\'nska}
\date{\today}
\begin{document}
\title{Matrixwise (approach to unexpected hypersurfaces) Reloaded}
\maketitle

\begin{abstract}
    In the paper we provide a new method of proving the existence of a hypersurface of degree $d$ in $\PP^n$, with a general point of multiplicity $m$ and vanishing at a given set of points $Z$,  by looking at weak combinatorics of a set $Z$. This method has a direct application in the theory of unexpected hypersurfaces, where many of the examples are based only on computer experiments.

    \bigskip
    \noindent \textbf{Keywords:} unexpected hypersurfaces, interpolation matrix, determinant

    \noindent \textbf{MSC classification:} 14N20, 14N05, 14M15, 15A06
\end{abstract}

\section{Introduction}
 The notion of an unexpected hypersurface was introduced in \cite{CHMN16}, where the authors,
 using the results of \cite{DIV}, gave an example of a quartic curve in $\PP^2$
that passes through a certain special set $Z$ of nine points imposing independent conditions on quartics; and which has a triple point in a general point $B$. 
Counting the condition for a quartic to pass through $Z$ and to have a triple (general) point, we get that we expect exactly 0 such quartic. However, it exists, so we call it an unexpected curve, and it is said to be of type $(4,3)$.  After \cite{CHMN16} appeared, the
notion of unexpected curve was generalized, also to unexpected hypersurfaces; see e.g., the introduction to \cite{MT-G}. Namely, take $\K[x_0,\ldots,x_n]$, the homogeneous coordinate ring of $\PP^n=\PP^n(\K)$, where $\K$ is any field of characteristic zero.  Let
$Z\subseteq \PP^n$ be a scheme with its saturated homogeneous ideal $I_Z$. Now, let $B_i\subset \PP^n$, $1\leq i\leq r$, be general linear varieties disjoint from $Z$.
For integers $m_i >0$, take the scheme $B=m_1B_1+\cdots+m_r B_r$ with the
ideal $I_B$. Let $ H_B$ be the Hilbert polynomial of $\K[x_0,\ldots,x_n]/I_B$.
\begin{definition}[\cite{HMNT}]
We say that a hypersurface $H$ of degree $d$ passing through $Z$  and passing through each $B_i$ with multiplicity $m_i$ is unexpected if 
$$\dim [I_{X\cup Z}]_d > \max(0, \dim [I_Z]_d - H_B(d));$$
i.e., $H$ is unexpected if vanishing on $B$ imposes on $[I_Z]_d$ fewer than the expected
number of conditions. We also assume that the conditions imposed by vanishing in $Z$ on the forms of degree $d$ are independent.
\end{definition}

Since \cite{CHMN16}, more papers on unexpected varieties have appeared, e.g., \cite{BMSS,DHRST18,HMNT}, and \cite{DFHMST} in particular, presenting more examples of unexpected hypersurfaces.

In the most of the cases only one general linear subvariety of dimension 0 (a point) is considered, leading to the situation where the set of points $Z$ imposes independent conditions on hypersurfaces of degree $d$, but imposing additionally vanishing at a general point $B$ with multiplicity $m$ gives dependency. In the most interesting cases, the ``unexpected" hypersurface is irreducible, and its existence is not obvious. So far, apart from several cases, the existence of unexpected curves in $\PP^2$ has been proven by using the theory of logarithmic bundles (or syzygy bundles) and their splitting type, or simply by calculating the equation of a hypersurface in a computer algebra package.

Our aim is to provide a new, geometrically-combinatorial method of proving the existence of such hypersurfaces; or rather, of proving the existence of hypersurfaces of a given degree and a given multiplicity in a general point, not caring, at first, whether the hypersurface is unexpected or not. The appearance of the quartic from \cite{CHMN16,DIV}, mentioned at the beginning, follows (by our approach) from the fact that the set $Z$ of nine points has the following property: there exist three lines passing through 4 points of $Z$ and four lines passing through 3 points of $Z$ (see Example \ref{ex:A4k+1} for details). To be more precise: by our method, every set of 9 points with this combinatorics (over any field) would admit a quartic vanishing at these points and additionally with triple point in general position. (Note that by \cite{FGST} all such are projectively equivalent).

The situation is very similar in many cases in $\PP^2$ and some cases in $\PP^3$. Namely, we investigate the ``weak combinatorics'' 
of the set of points $Z$. This means we are interested only in the degree of the arrangement, the number of points on each hyperplane of it, then the number of points on the intersection of the hyperplanes etc., as well as the number of lines (planes, spaces, etc.) through each point of $Z$, cf. e.g., \cite{Pokora}.
This weak combinatorics allows us to prove the existence of a curve (or hypersurfaces) with desired properties. Another, rather astonishing, example is in $\PP^3$: whenever we have a set $Z$ of 12 points, lying on 12 planes, with each plane containing 6 points and each point belonging to 6 planes (the $D4$ configuration has these properties), then the unexpected hypersurface of degree 3 with a general point of multiplicity 3 exists.

We think that this combinatorial nature of unexpectendess is interesting and it is worth studying. 
There may be a connection with another method of showing the existence of unexpected curves; namely by computing the splitting type, which is possible to be done "by hand" in case of, for example, supersolvable or inductively free arrangements, \cite{D,DMO,Janasz}. We  we will try to understand this connection in further research. 

We found a series of examples where our method does not work (up to our current understanding), but we do not know whether it \emph{cannot} work or just we miss some combinatorics that we should use.

Concerning the methods, the present paper is in a sense a continuation of \cite{DFHMST}. 
In that paper, the authors considered a set $Z$ of points in $\PP^n$ (where $\PP^n$ has coordinates $(x_0,\ldots,x_n)$) such that $Z$ and a general point
$B=(a_0,\ldots,a_n)$ with multiplicity $m$ impose independent conditions on forms of degree $d$,
with the affine dimension of the system
of forms of degree $d$ vanishing on $Z$ and on
$mB$ being 1. Then they proved that
the equation of the  hypersurface of degree $d$ and passing through $Z$ and vanishing in $B$ to order $m$,  is a determinant $D$ of a suitable interpolation matrix $M$,  
involving two groups of coordinates, $(a_0,\ldots,a_n)$, $(x_0,\ldots,x_n)$. This $D$, if not
 identically zero, may be seen as a bihomogeneous polynomial \big(of bidegree $\left(\binom{m+n-1}{n} (d-m+1),d\right)$
and so defines a variety in $\PP^n\times\PP^n$. Studying this variety, the authors were able, among others, to explain the phenomenon of the so-called BMSS duality, see \cite{BMSS,DFHMST}.

In the present paper, we change a point of view. We consider an interpolation matrix $M$ for forms of degree $d$, vanishing in a given set of points $Z$ 
(with $|Z|=s$)
and vanishing at a point $B$ with multiplicity $m$,
where $d$, $m$, and $s$ are such that $M$ is a square matrix. 
Let $F$ denote the determinant of this matrix, seen as a function of the coordinates of $B$. Then the set $\{B: F(B)=0\}$ describes a locus of such points $B$, that there exists a form of degree $d$ and vanishing at a given set $Z$, as well as vanishing at a point $B$ with multiplicity $m$, i.e., the linear system ${\cal L}(d; mB+Z)$ is non-empty.
Thus, to obtain a hypersurface vanishing at $Z$ and vanishing to order $m$ in a \textit{general} $B$, we have to prove that $F\equiv 0,$ so the locus is the whole $\PP^n$.

The paper is organized as follows. 
In Section \ref{sec:mt} we prove 
our main theorem, which enables to bound from below a multiplicity of a point in the set $F=0$. 
In next sections we show how we may explain the existence of unexpected curves/hypersurfaces by obtaining, 
from Theorem \ref{thm:main}, that $F\equiv 0$. We begin with the simples cases in $\PP^2$, where $|Z|+\binom{m+1}{2}=\binom{d+2}{2}$ (see Section \ref{sec:ex1}). Then we pass to the situation where $|Z|+\binom{m+1}{2}=\binom{d+2}{2}+1$ (Section \ref{sec:ex2}), where we are also able to provide a nice combinatorial condition. In this Section we also briefly describe the limits of our method, by applying it to the list of known arrangements. In Section \ref{sec:ex3} we provide an interesting example, where the difference between $d$ and $m$ is greater than one, and to prove the existence we need also some curves of degree 4 passing through a subset of the points of $Z$. The last Section contains two examples in $\PP^3$. We focus on presenting examples because of two reasons: we think that that way it is easier to understand the method and to use it for somebody's own research; furthermore, there is a broad list of sporadic examples of unexpected hypersurfaces, so we must investigate it case by case.

\section{Main theorem}\label{sec:mt}
Denote by $Z \subset \PP^N$ a set of $s$  distinct points $\{P_1, \ldots ,P_s\}$ with coordinates $(p_{i0},\ldots,p_{iN})$, for $i=1,\ldots,s$. Let $d$ and $m$ be positive integers such that the following equality is satisfied:
\begin{equation}\label{dim}
 \binom{d+N}{N}=\binom{m+N-1}{N}+s.
 \end{equation}
Fix the vector $w$ consisting of all monomials of degree $d$ in variables $(a_0, \dots, a_N)$, arranged in a chosen order, for example
$$w(\a0n):=(a_{0}^d, a_0^{d-1}a_1, a_0^{d-1}a_2,\ldots,a_{N-1}a_N^{d-1},a_N^{d}).$$ 
Define a matrix $M$ as follows. The first
$s$ rows correspond to $w(P_1), \ldots, w(P_s)$; the next
$\binom{m+N-1}{N}$ rows represent all the partial derivatives of $w$ of order $m-1$.
(As before, the order of differentiation does not matter, but we fix one for consistency.)
 Thus the matrix $M$ is a square matrix of the form:
\begin{equation}
\label{eq:defM}
M=\left[\begin{array}{c}
w(P_1)\\
\vdots\\
w(P_s)\\
\frac{\partial^{m-1}w}{\partial a_0^{m-1} } \\
\frac{\partial^{m-1}w}{\partial a_0^{m-2}\partial a_1 } \\
\vdots\\
\frac{\partial^{m-1}w}{\partial a_N^{m-1} } 
\end{array}\right]=
\end{equation}
{\tiny	$$
	\left[\begin{array}{cccccc}
    	p_{10}^d& p_{10}^{d-1}p_{11} & p_{10}^{d-1}p_{12}&\ldots&p_{1N-1}p_{1N}^{d-1}&p_{1N}^{d}\\
	\vdots&\vdots&\vdots& &\vdots&\vdots\\
	p_{s0}^d& p_{s0}^{d-1}p_{s1} & p_{s0}^{d-1}p_{s2}&\ldots&p_{sN-1}p_{sN}^{d-1}&p_{sN}^{d}\\
	\frac{\partial^{m-1}}{\partial a_0^{m-1} }(a_0^d)&	\frac{\partial^{m-1}}{\partial a_0^{m-1} }(a_0^{d-1}a_1) &	\frac{\partial^{m-1}}{\partial a_0^{m-1} }(a_0^{d-1}a_2) &
	\dots&
	\frac{\partial^{m-1}}{\partial a_0^{m-1} }(a_{N-1}a_N^{d-1}) &
	\frac{\partial^{m-1}}{\partial a_0^{m-1} }(a_N^d)  \\
	
	\frac{\partial^{m-1}}{\partial a_0^{m-2}\partial a_1}(a_0^d)&	\frac{\partial^{m-1}}{\partial a_0^{m-2}\partial a_1 }(a_0^{d-1}a_1) &	\frac{\partial^{m-1}}{\partial a_0^{m-2}\partial a_1 }(a_0^{d-1}a_2) &
	\dots&
	\frac{\partial^{m-1}}{\partial a_0^{m-2}\partial a_1 }(a_{N-1}a_N^{d-1}) &
	\frac{\partial^{m-1}}{\partial a_0^{m-2}\partial a_1 }(a_N^d) \\
	\vdots&\vdots&\vdots& &\vdots&\vdots\\
	\frac{\partial^{m-1}}{\partial a_N^{m-1}}(a_0^d)&	\frac{\partial^{m-1}}{\partial a_N^{m-1} }(a_0^{d-1}a_1) &	\frac{\partial^{m-1}}{\partial a_N^{m-1} }(a_0^{d-1}a_2) &
	\dots&
	\frac{\partial^{m-1}}{\partial a_N^{m-1} }(a_{N-1}a_N^{d-1}) &
	\frac{\partial^{m-1}}{\partial a_N^{m-1}}(a_N^d)
	\end{array}\right].
	$$}
\begin{remark}
    Let $F=F(\a0n)=\det M$. The set $\{F=0\}$ is the locus of such points, for which there exists a non-zero element in the system $\mathfrak{L}(d;mB+Z)$.
\end{remark}
\begin{example}
\label{ex:DogKap} The set $\{F=0\}$ was investigated by \cite{DoKa} in case the points are in general position. In particular, they prove that for any 
$d$, and a set $Z$ of $2d+1$ points in general position, the locus of points $B$, such that the system ${\cal L}(d; (d-1)B+Z)$ is non-empty, is a curve of degree $d(d-1)$ with $(d-1)$-tuple points in $Z$. In Figures \ref{fig:7pts}  and \ref{fig:9pts} we show, for ``general'' points listed below, 
 the locus of the interpolation matrix for $d=3$ and $d=4$. These are exactly the curves described by Dolgachev and Kapranov in \cite{DoKa}. 

The set $Z$ of seven points consists of
    $$Z=\{(0,0,1),(1,0,1),(0,1,1),(2,-2,1),(-1,-3,1),(3,5,1),(4,1,1) \}$$
and to get the set of nine points, we add to $Z$ the following two points $\{(-3,5,1),(-5,2,1) \}$.
    \begin{figure}[ht]
        \centering
        \begin{subfigure}{0.48\textwidth}
            \centering
            \includegraphics[width=0.85\linewidth]{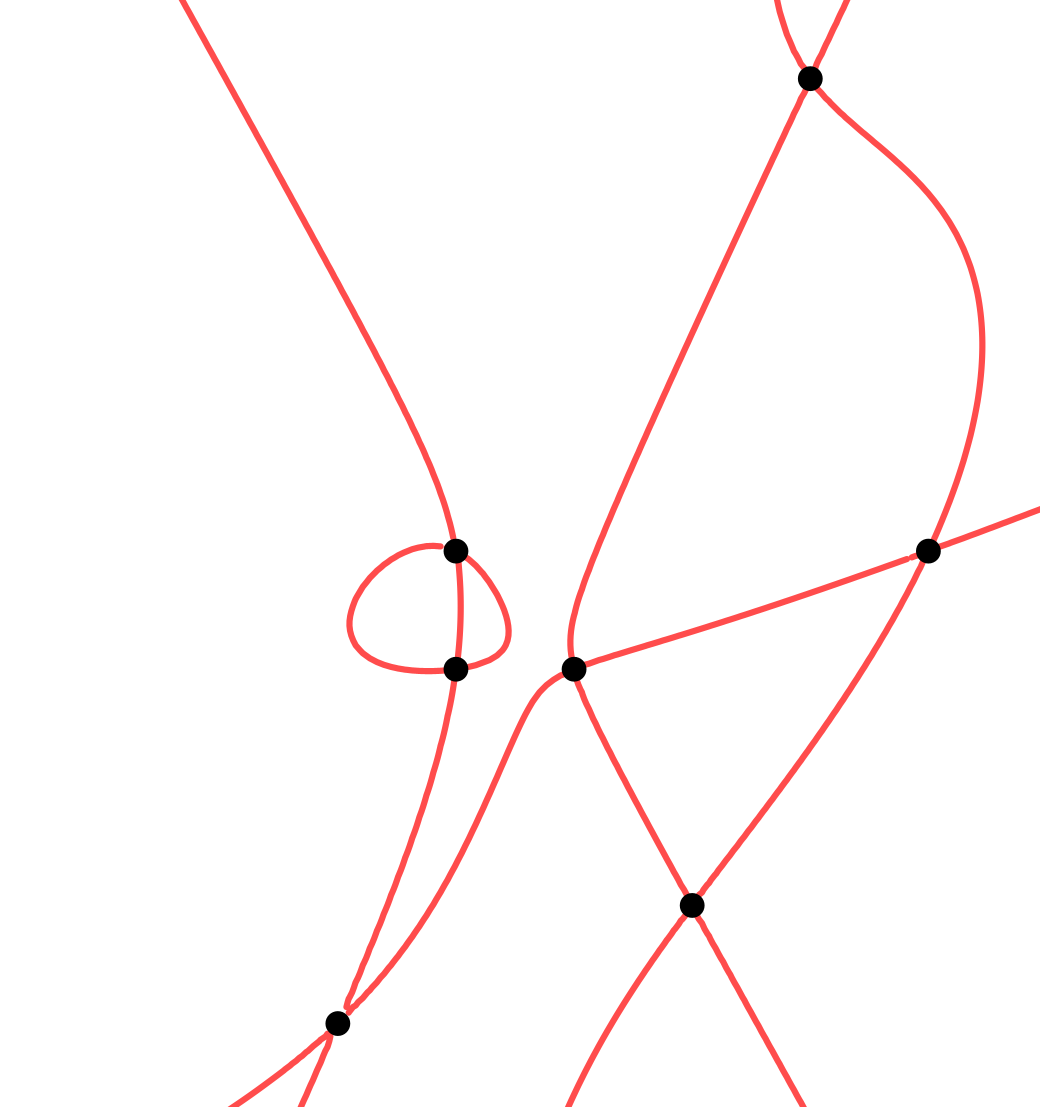}
            \caption{}
            \label{fig:7pts}
        \end{subfigure}
        \hfill 
        \begin{subfigure}{0.48\textwidth}
            \centering
            \includegraphics[width=\linewidth]{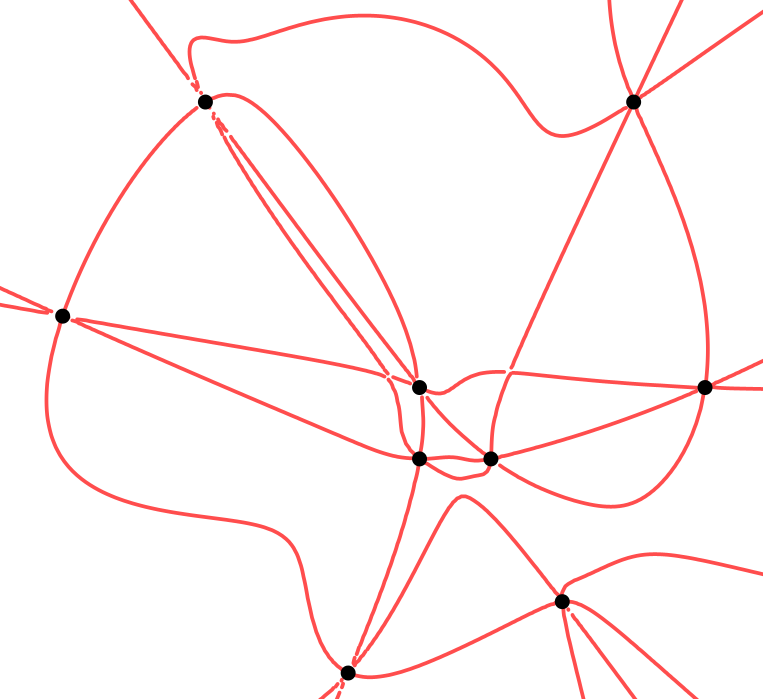}
            \caption{}
            \label{fig:9pts}
        \end{subfigure}
        \caption{The set $\{F=0\}$ for 7 and for 9 general points }
        \label{fig:jumpingLines}
    \end{figure}
\end{example}

Now, we present the essential result of our paper, namely the theorem, which enables us to bound the multiplicity of $B$ in the locus from below.

\begin{theorem}
\label{thm:main}
Let $d$ and $m$ be positive integers with $d\geq m$.
Let $P_1,\dots, P_s$ be pairwise distinct points in $\PP^N$, where $s$ is such that 
$$
\binom{d+N}{N}=\binom{m+N-1}{N}+s.
$$
Let $B$ be a fixed point in $\PP^N$, and $F(\a0n)=\det M$ a polynomial of degree $\binom{m+N-1}{N}(d-m+1)$.
Define
$$h_{j,B}:=
\begin{cases}
    \dim{\cal L}(d;jB+Z)-\left(\binom{d+N}{N}-\binom{j+N-1}{N}-s\right), & \text{ for } j=1,\dots,m \\
    \dim{\cal L}(d;jB+Z), & \text{ for } j=m+1,\dots,d
\end{cases},$$
and $h_B:=\sum_{j=1}^d h_{j,B}.$ Then the multiplicity of $B$ in the set $F=0$ is at least $h_B$. 
\end{theorem}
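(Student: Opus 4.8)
The plan is to recognise $F$, up to a nonzero constant, as a generalised Wronskian of the linear system $\mathcal L(d;Z)$, and then to read off its order of vanishing at $B$ after a row reduction (via Euler's relation) that turns the Wronskian matrix into one whose rows are partial derivatives of orders $0,1,\dots,m-1$. First I would dispose of a degenerate case: if $Z$ does not impose independent conditions on $[\K[\x0n]]_d$, the first $s$ rows of $M$ are dependent, $F\equiv 0$, and $\mathrm{mult}_B F=\infty\ge h_B$; so assume $\dim\mathcal L(d;Z)=\binom{d+N}{N}-s=\binom{m+N-1}{N}=:q$. Fix a basis $g_1,\dots,g_q$ of $\mathcal L(d;Z)$ and complete $\{\mathrm{ev}_{P_1},\dots,\mathrm{ev}_{P_s}\}$ to a basis of $[\K[\x0n]]_d^{*}$ by functionals dual to them. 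Rewriting $\det M$ in the corresponding (constant) change of basis, the evaluation block becomes $[\,0\mid I_s\,]$ and one obtains $F(\a0n)=c\cdot\det\big[\partial^\alpha g_k(\a0n)\big]_{|\alpha|=m-1,\ 1\le k\le q}$ with $c\ne0$; call this determinant $\mathbf W$, so $\mathrm{mult}_B F=\mathrm{ord}_B\mathbf W$. After a linear change of the variables $\a0n$ (which multiplies $\mathbf W$ by a nonzero constant and does not affect the multiplicity at $B$) I may assume $B=[1:0:\dots:0]$.

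The crucial step is a row reduction of the $q\times q$ matrix $\big[\partial^\alpha|_a\big]_{|\alpha|=m-1}$. Put $S=\{\beta=(0,\beta_1,\dots,\beta_N):|\beta|\le m-1\}$; then $|S|=q$, and the value $j$ is realised as $|\beta|$, $\beta\in S$, exactly $\binom{j+N-1}{N-1}$ times for $j=0,\dots,m-1$. Repeated application of Euler's identity $\sum_i a_i\partial_i h=(\deg h)h$ writes each $\partial^\alpha|_a$ with $|\alpha|=m-1$ as $\sum_{\beta\in S}P_{\alpha\beta}(a)\,\partial^\beta|_a$, where the $P_{\alpha\beta}$ are polynomial in $a_1,\dots,a_N$ and in $a_0^{-1}$, hence regular at $B$; and at $a=B$ one computes $\partial^\alpha f(B)=\tfrac{(d-|\alpha'|)!}{(d-m+1)!}\,\partial^{\alpha'}f(B)$ with $\alpha'=(0,\alpha_1,\dots,\alpha_N)\in S$, so the matrix $P(B)$ is a permutation matrix times a nonsingular diagonal one. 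Therefore $\det P(a)$ is a unit at $B$, $\mathbf W=\det P(a)\cdot\widetilde{\mathbf W}$ with $\widetilde{\mathbf W}:=\det\big[\partial^\beta g_k(a)\big]_{\beta\in S,\,k}$, and $\mathrm{ord}_B\mathbf W=\mathrm{ord}_B\widetilde{\mathbf W}$.

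For the final step I would choose $g_1,\dots,g_q$ adapted to the flag $\mathcal L(d;dB+Z)\subseteq\dots\subseteq\mathcal L(d;Z)$, so that $g_k$ has multiplicity $\mu_k$ at $B$ with $\mu_1\le\dots\le\mu_q$ and $\#\{k:\mu_k\ge j\}=\dim\mathcal L(d;jB+Z)$. Since every $\beta\in S$ differentiates only in $x_1,\dots,x_N$, the entry $\partial^\beta g_k$ vanishes at $B=[1:0:\dots:0]$ to order at least $\max(0,\mu_k-|\beta|)$; expanding $\widetilde{\mathbf W}$ over bijections $\sigma\colon\{1,\dots,q\}\to S$ gives $\mathrm{ord}_B\widetilde{\mathbf W}\ge\min_\sigma\sum_k\max(0,\mu_k-|\sigma(k)|)$. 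Let $\lambda_1\le\dots\le\lambda_q$ be the nondecreasing rearrangement of $\{|\beta|:\beta\in S\}$; one checks $\#\{k:\lambda_k\ge j\}=\max\!\big(0,\binom{d+N}{N}-\binom{j+N-1}{N}-s\big)$, exactly the quantity subtracted in the definition of $h_{j,B}$. Convexity of $t\mapsto\max(0,t)$ shows the minimum is attained when $\mu$ and $\lambda$ are matched in increasing order, and the hypotheses $\dim\mathcal L(d;jB+Z)\ge\max(0,\binom{d+N}{N}-\binom{j+N-1}{N}-s)$ force $\mu_k\ge\lambda_k$ for all $k$, so the $\max(0,\cdot)$ is superfluous and
$$
\mathrm{ord}_B\widetilde{\mathbf W}\ \ge\ \sum_{k=1}^{q}(\mu_k-\lambda_k)
=\sum_{j\ge1}\Big(\dim\mathcal L(d;jB+Z)-\max\!\big(0,\tbinom{d+N}{N}-\tbinom{j+N-1}{N}-s\big)\Big)
=\sum_{j=1}^{d}h_{j,B}=h_B .
$$
Together with $\mathrm{mult}_B F=\mathrm{ord}_B\mathbf W$ this proves $\mathrm{mult}_B F\ge h_B$.

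The step I expect to require the most care is the Euler row reduction in the second paragraph: one must verify that passing from the family of all order-$(m-1)$ derivatives to the ``staircase'' family $S$ (one derivative of order $0$, $N$ of order $1$, and so on) introduces neither poles nor a rank drop at $B$, i.e.\ that $\det P(a)$ is a genuine unit there. This is not a cosmetic point — applying column multilinearity directly to $\mathbf W$ only yields the weaker bound $\sum_{j\ge m}h_{j,B}$, so the entire contribution of the low-order excesses $h_{1,B},\dots,h_{m-1,B}$ (which are nonzero precisely when $B$ is special with respect to $Z$, e.g.\ when $B\in Z$) is recovered through this reduction. The combinatorial identity in the last paragraph is then routine.
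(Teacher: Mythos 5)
Your proof is correct, but it follows a genuinely different route from the one in the paper. The paper differentiates $F=\det M$ directly via multilinearity of the determinant and controls each resulting term by tracking how the ranks at $B$ of the nested interpolation matrices $M_1\subset\dots\subset M_d$ can jump: applying $t$ derivatives to a row raises the sum of ranks by at most $t$, while the total rank deficiency at $B$ is exactly $h_B$, so fewer than $h_B$ derivatives cannot produce a nonvanishing term. You instead pass to the structural identity $F=c\cdot\det\bigl[\partial^{\alpha}g_k\bigr]$ for a basis $g_1,\dots,g_q$ of ${\cal L}(d;Z)$ (the degenerate case $\dim{\cal L}(d;Z)>\binom{m+N-1}{N}$ being trivial), perform the Euler-identity row reduction to the staircase family $S$, and then count orders of vanishing for a flag-adapted basis via a rearrangement inequality. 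I checked the step you flag as delicate: the reduction matrix $P(a)$ has entries in $\K[a_1,\dots,a_N,a_0^{-1}]$, and at $B=[1:0:\dots:0]$ one has $\partial^{\alpha}h(B)=\frac{(d-|\alpha'|)!}{(d-m+1)!}\,\partial^{\alpha'}h(B)$ for \emph{every} form $h$ of degree $d$; since the functionals $h\mapsto\partial^{\beta}h(B)$, $\beta\in S$, are linearly independent on $[\K[\x0n]]_d$, this pins down $P(B)$ as a nonsingular monomial matrix, so $\det P$ is indeed a unit at $B$ and no order of vanishing is lost. The remaining combinatorics ($|S|=\binom{m+N-1}{N}$, $\#\{\beta\in S:|\beta|\ge j\}=\binom{d+N}{N}-\binom{j+N-1}{N}-s$ for $j\le m$, the exchange argument for $\max(0,\cdot)$, and $\mu_{(k)}\ge\lambda_k$ from $\dim\ge\vdim$) all check out. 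What each approach buys: yours makes transparent exactly how each layer ${\cal L}(d;jB+Z)$ of the flag contributes $h_{j,B}$ to the multiplicity, and ties the theorem to classical generalized-Wronskian arguments; the paper's version avoids choosing an adapted basis and the Euler reduction, works uniformly with the raw interpolation matrix, and is perhaps easier to adapt when one wants to reason about the rectangular matrices $M_j$ with $j>m$ directly, as the later sections do. One cosmetic caveat: your phrase ``complete the evaluation functionals to a basis of the dual by functionals dual to them'' should just say that you pick forms $f_1,\dots,f_s$ with $f_j(P_i)=\delta_{ij}$ alongside the $g_k$; the argument is otherwise complete.
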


\begin{notation}
    To simplify the notation, in what follows we will mostly
write $(d; mB+Z)$ instead of ${\cal L}(d; mB+Z)$.
\end{notation}

\begin{proof}
For the proof, similarly to \cite{DFHMST}, we dehomogenize the matrix $M$ from \eqref{eq:defM}, and consequently, the equation of $F$ is also considered in its dehomogenized form. This approach does not change the thesis of the proof. One can use two explanations. The first is to observe that the problem of computing the multiplicity of $F$ at $B$ is local, so the affine version is as good as the homogeneous. The second approach is presented in \cite{DFHMST} and boils down to checking all the details of the dehomogenization.

First, we introduce the necessary notation. Denote by $\widehat{w}(a_1,\ldots,a_N)$ the vector $w$ after dehomogenization with respect to the chosen variable $a_0$, such that the first entry in $P_j$ is scaled and, after possibly linear change of coordinates, equal $1$. We construct an ascending series of affine interpolation matrices $M_j$, for $j=1,\ldots,d$, defined as follows. The first $s$ rows correspond to the values $\widehat{w}(P_1), \ldots, \widehat{w}(P_s)$, followed by $\frac{j\binom{j+N}{N}}{N+1}$ rows, which contain the values of all partial derivatives of $\widehat{w}$ of orders ranging from $0$ to $j-1$. Note that each entry in $M_j$ is an element of $\K[a_1,\ldots,a_n]$ and $M_d$ is the matrix that has $\binom{d+N}{N}$ columns and $\frac{d\binom{d+N}{N}}{N+1}+s$ rows. 

\noindent
\begin{minipage}{0.65\textwidth}
If we denote by $M$ dehomogenized matrix from equation \eqref{eq:defM}, we get the chain of ``inclusions"
$$M_1 \subset M_2 \subset \ldots \subset M_m=M \subset \ldots \subset M_d, $$
which, in fact, means that each matrix $M_j$ consists of the matrix $M_{j-1}$ completed by rows of all partial derivatives of orders $j-1$, and therefore we write $M_{j+1}\setminus M_j$  for the set of derivatives of range $j+1$ in the interpolation matrix $M_d$.
\end{minipage}
\hfill
\begin{minipage}{0.30\textwidth}
\begin{tikzpicture}[line cap=round,line join=round,>=triangle 45,x=1.0cm,y=1.0cm,scale=0.5]
\clip(1.84,-8.5) rectangle (10,4);
\draw [line width=0.5pt] (2.5,4.)-- (2.,4.);
\draw [line width=0.5pt] (2.,4.)-- (2.,2.);
\draw [line width=0.5pt] (2.,2.)-- (2.5,2.);
\draw [line width=0.5pt] (5.5,4.)-- (6.,4.);
\draw [line width=0.5pt] (6.,4.)-- (6.,2.);
\draw [line width=0.5pt] (6.,2.)-- (5.5,2.);
\draw [line width=0.5pt,dash pattern=on 2pt off 2pt] (2.,2.)-- (2.,1.);
\draw [line width=0.5pt,dash pattern=on 2pt off 2pt] (6.,2.)-- (6.,1.);
\draw [line width=0.5pt] (2.,1.)-- (2.,0.);
\draw [line width=0.5pt] (6.,1.)-- (6.,0.);
\draw [line width=0.5pt] (2.,0.)-- (2.5,0.);
\draw [line width=0.5pt] (5.5,0.)-- (6.,0.);
\draw [line width=0.5pt] (2.,-2.)-- (2.,-4.);
\draw [line width=0.5pt] (2.,-6.)-- (2.,-8.);
\draw [line width=0.5pt] (6.,-2.)-- (6.,-4.);
\draw [line width=0.5pt] (6.,-6.)-- (6.,-8.);
\draw [line width=0.5pt] (2.,-8.)-- (2.5,-8.);
\draw [line width=0.5pt] (5.5,-8.)-- (6.,-8.);
\draw (6,2.8) node[anchor=north west] {$M_1$};
\draw (6,0.8) node[anchor=north west] {$M_2$};
\draw (6.2,-1) node[anchor=north west] {$\vdots$};
\draw (6,-3.1) node[anchor=north west] {$M_m=M$};
\draw (6.2,-5) node[anchor=north west] {$\vdots$};
\draw (6,-7.1) node[anchor=north west] {$M_d$};
\draw [line width=0.5pt] (2.,-4.)-- (2.5,-4.);
\draw [line width=0.5pt] (6.,-4.)-- (5.5,-4.);
\draw [line width=0.5pt,dotted] (2.,-4.)-- (2.,-6.);
\draw [line width=0.5pt,dotted] (6.,-4.)-- (6.,-6.);
\draw [line width=0.5pt,dotted] (2.,0.)-- (2.,-2.);
\draw [line width=0.5pt,dotted] (6.,0.)-- (6.,-2.);
\end{tikzpicture}
\end{minipage}

An example of such matrices for $N=2$, $s=11$ and $m=d=3$ is as follows.
\begin{footnotesize}
    \begin{equation*}
        \begin{bmatrix}
            \vdots \\
            \widehat{w}(P_{10}) \\
            \widehat{w}(P_{11}) \\
            \widehat{w} \\
            \partial_{a_1}\widehat{w} \\
            \partial_{a_2}\widehat{w} \\
            \partial_{a_1}\partial_{a_1}\widehat{w} \\
            \partial_{a_1}\partial_{a_2}\widehat{w} \\
            \partial_{a_2}\partial_{a_2}\widehat{w} 
        \end{bmatrix}=
        \begin{bmatrix}
            &&&&&\vdots&&&& \\
            p_{1,10} & p_{2,10} & p_{3,10} & p_{4,10} & p_{5,10} & p_{6,10} & p_{7,10} & p_{8,10} & p_{9,10} & p_{10,10} \\
            p_{1,11} & p_{2,11} & p_{3,11} & p_{4,11} & p_{5,11} & p_{6,11} & p_{7,11} & p_{8,11} & p_{9,11} & p_{10,11} \\
            1 & a_1 & a_2 & a_1^2 & a_1 a_2 & a_2^2 & a_1^3 & a_1^2 a_2 & a_1 a_2^2 & a_2^3 \\
            0 & 1 & 0 & 2a_1 & a_2 & 0 & 3a_1^2 & 2a_1 a_2 & a_2^2 & 0 \\
            0 & 0 & 1 & 0 & a_1 & 2a_2 & 0 & a_1^2 & 2 a_1 a_2 & 3 a_2^2 \\
            0 & 0 & 0 & 2 & 0 & 0 & 6a_1 & 2 a_2 & 0 & 0 \\
            0 & 0 & 0 & 0 & 1 & 0 & 0 & 2a_1 & 2a_2 & 0 \\
            0 & 0 & 0 & 0 & 0 & 2 & 0 & 0 & 2 a_1 & 6a_2
        \end{bmatrix} =
        \left[ \begin{array}{cc}
            \\
            \\
            M_1 \\
           \\ \hline
          \\
            M_2\\ \hline
          \\
            M_3 \\
        \\
        \end{array} \right]
    \end{equation*}
\end{footnotesize}

The main idea of the proof is to show that all derivatives of $F=\det M$ up to order $h_B-1$ vanish at $B$. Since the determinant is a multilinear function, derivative of $F$ can be represented as a sum of determinants of matrices obtained by applying appropriate partial derivatives to the rows of $M$, for example 
\begin{equation}\label{multidet}
\partial \det \left[ \begin{array}{c} \text{row}_1 \\ \text{row}_2 \\ \vdots \\ \text{row}_n \end{array}\right] =
\det \left[ \begin{array}{c} \partial(\text{row}_1) \\ \text{row}_2 \\ \vdots \\ \text{row}_n \end{array}\right] +
\det \left[ \begin{array}{c} \text{row}_1 \\ \partial(\text{row}_2) \\ \vdots \\ \text{row}_n \end{array}\right] + \ldots +
\det \left[ \begin{array}{c} \text{row}_1 \\ \text{row}_2 \\ \vdots \\ \partial(\text{row}_n) \end{array}\right].
\end{equation}
For higher order derivatives we use a similar formula. For example, for a derivative $\partial_{a_1}^3 \partial_{a_2}^2$ the part of this sum may look as follows
$$
\ldots+\det \left[ \begin{array}{c} \partial_{a_1}\partial_{a_1}\partial_{a_2}(\text{row}_1) \\ \partial_{a_2}(\text{row}_2) \\ \vdots \\ \partial_{a_1}(\text{row}_n) \end{array}\right] +
\ldots+
\det \left[ \begin{array}{c} \text{row}_1 \\ \partial_{a_1}\partial_{a_1}\partial_{a_2}(\text{row}_2) \\ \vdots \\ \partial_{a_1}\partial_{a_2}(\text{row}_n) \end{array}\right]+
\ldots \quad.
$$

Consider one term in this sum, denoted by $ \widetilde{M}$. Thus, our aim is to prove that $\det \widetilde{M}(B)=0.$ Observe that $\widetilde{M}$ is obtained by applying partial derivatives to rows of $M$. Since the order of differentiating is not important, we will differentiate from top to bottom, i.e. first we differentiate the rows of $M_1$, then $M_2 \setminus M_1$, and so on.

\vspace{0.2cm}

\noindent
\begin{minipage}{0.65\textwidth}
$\quad$ Stage $1$. Assume that a fixed row of $M_1$ is differentiated $t$ times. Then, after differentiation, ranks of $M_1(B), \ldots,M_t(B)$ may jump by at most one (because only one row is changed). In contrast, ranks of  $M_{t+1}(B), \ldots,M_d(B)$ cannot jump, since $t$-th derivative of this row appears in these matrices. 

$\quad$ We apply all necessary derivatives to rows of $M_1$ obtaining $M_1^{(1)}$ (and $M_j^{(1)}$, since all these matrices contain $M_1^{(1)}$). After applying $t$ derivatives to rows of $M_1$, the sum of ranks may jump by at most $t$:
$$
\sum_{j=1}^d \rank M_j^{(1)}(B) \leq \sum_{j=1}^d \rank M_{j}(B) +t $$
After the described first stage, the resulting matrix is shown in the adjacent figure.
\end{minipage}
\hfill
\begin{minipage}{0.30\textwidth}
\begin{tikzpicture}[line cap=round,line join=round,>=triangle 45,x=1.0cm,y=1.0cm,scale=0.5]
\clip(1.84,-8.5) rectangle (12,4.5);
\draw [line width=0.5pt] (2.5,4.)-- (2.,4.);
\draw [line width=0.5pt] (2.,4.)-- (2.,2.);
\draw [line width=0.5pt] (2.,2.)-- (2.5,2.);
\draw [line width=0.5pt] (5.5,4.)-- (6.,4.);
\draw [line width=0.5pt] (6.,4.)-- (6.,2.);
\draw [line width=0.5pt] (6.,2.)-- (5.5,2.);
\draw [line width=0.5pt,dash pattern=on 2pt off 2pt] (2.,2.)-- (2.,1.);
\draw [line width=0.5pt,dash pattern=on 2pt off 2pt] (6.,2.)-- (6.,1.);
\draw [line width=0.5pt] (2.,1.)-- (2.,0.);
\draw [line width=0.5pt] (6.,1.)-- (6.,0.);
\draw [line width=0.5pt] (2.,0.)-- (2.5,0.);
\draw [line width=0.5pt] (5.5,0.)-- (6.,0.);
\draw [line width=0.5pt] (2.,-2.)-- (2.,-4.);
\draw [line width=0.5pt] (2.,-6.)-- (2.,-8.);
\draw [line width=0.5pt] (6.,-2.)-- (6.,-4.);
\draw [line width=0.5pt] (6.,-6.)-- (6.,-8.);
\draw [line width=0.5pt] (2.,-8.)-- (2.5,-8.);
\draw [line width=0.5pt] (5.5,-8.)-- (6.,-8.);
\draw (6,2.8) node[anchor=north west] {$M_1^{(1)}$};
\draw (7.9,4.5) node[anchor=north west] {Only here};
\draw (8.3,3.6) node[anchor=north west] {changes};
\draw (7.9,2.7) node[anchor=north west] {were made};
\draw (6,0.8) node[anchor=north west] {$M_2^{(1)}$};
\draw (6.2,-1) node[anchor=north west] {$\vdots$};
\draw (6,-3.1) node[anchor=north west] {$M_m^{(1)}$};
\draw (6.2,-5) node[anchor=north west] {$\vdots$};
\draw (6,-7.1) node[anchor=north west] {$M_d^{(1)}$};
\draw [line width=0.5pt] (2.,-4.)-- (2.5,-4.);
\draw [line width=0.5pt] (6.,-4.)-- (5.5,-4.);
\draw [line width=0.5pt,dotted] (2.,-4.)-- (2.,-6.);
\draw [line width=0.5pt,dotted] (6.,-4.)-- (6.,-6.);
\draw [line width=0.5pt,dotted] (2.,0.)-- (2.,-2.);
\draw [line width=0.5pt,dotted] (6.,0.)-- (6.,-2.);
\end{tikzpicture}
\end{minipage}

\vspace{0.2cm}

We will continue in a similar way. Let us describe the general step in our induction.

Stage $k$. After previous stages, we have 
$$
\sum_{j=1}^d \rank M_j^{(k-1)}(B) \leq \sum_{j=1}^d \rank M_{j}(B) + \text{ number of derivatives used so far}. $$
Again, we apply all necessary derivatives to the rows of $ M_k^{(k-1)} \setminus M_{k-1}^{(k-1)}$. If a fixed row is differentiated $t$ times, then the ranks of $M_k^{(k-1)}, \ldots, M_{k+t-1}^{(k-1)}$ may increase by at most one, while the ranks of $M_{k+t}^{(k-1)}, \ldots, M_d^{(k-1)}$ cannot change. We conclude that
\begin{equation}
\label{eq:Mk}
    \sum_{j=1}^d \rank M_j^{(k)}(B) \leq \sum_{j=1}^d \rank M_{j}(B) + \text{ number of derivatives used up to Stage $k$}
\end{equation}

Let $r_j$ denote the minimum of the numbers of rows and columns of $M_j$. Observe that, by definition, for each $j$, the number $h_{j,B}$ represents the deficiency rank of the interpolation matrix $M_j(B)$, so 
\begin{equation}
\label{eq:maxRankEq}
    \sum_{j=1}^d \rank M_j(B) \leq \sum_{j=1}^d r_j -h_B.
\end{equation}

Assume that $\det\widetilde{M}(B)\neq 0$. Thus, all matrices $M_1^{(d)}, \ldots,M_{d}^{(d)}$ must have maximal rank:
\begin{equation}
\label{eq:maxRank}
    \sum_{j=1}^d \rank M_j^{(d)} = \sum_{j=1}^d r_j.
\end{equation}
Combining \eqref{eq:Mk} for Stage $d$, \eqref{eq:maxRankEq} and \eqref{eq:maxRank}, we obtain 
$$\text{ number of derivatives used up to Stage $d$} \geq h_B,$$
which is a contradiction.

\end{proof}
\begin{remark}
    Going back to Example \ref{ex:DogKap} we easily recover the fact that the curve described in \cite{DoKa} has $(d-1)$-tuple points; namely for $B\in Z$ we have $h_{j,B}\geq 1$, for $j=1,\ldots, d-1$, so
    $h_B\geq d-1.$
\end{remark}

\section{Examples with a square interpolation matrix}\label{sec:ex1}
We begin this section by recalling some well-known line configuration on the projective plane, which are simplicial arrangements, i.e., all cells cut out by lines are triangles. 

As the first example, we consider the so-called $A(4k+1,1)$ a family of lines (see e.g. \cite{Gru09}), which are defined over the real projective plane and can be viewed as follows. We consider a regular polygon with $2k\geq 4$ edges, and we take $2k$ lines corresponding to the sides of the $2k$-gone, the lines corresponding to symmetry axes of the $2k$-gone, and the line at infinity. This gives $4k+1$ lines in total in this configuration. In \cite{DMO} the authors prove that the dual configuration of points to $A(4k+1,1)$ gives rise to the unexpected curves of type $(2k,2k-1)$, by using supersolvable property (see \cite[Theorem 3.15]{DMO}). As an example of the application of our main theorem, we present a different proof of this fact.
    
\begin{example}
\label{ex:A4k+1}
    We consider the set $Z\in \PP^2$ of $4k+1$ points, corresponding to the dual of the $A(4k+1,1)$ line arrangement. This set consists of $2k(k-1)$ lines, each passing through exactly $3$ points, $k$ lines intersecting precisely $4$ points, and a single line passing through $2k$ points. We claim that there exists an unexpected curve of type $(d,m)=(2k,2k-1)$. To prove this, it is sufficient to show that this curve exists, and to prove this, we use Theorem \ref{thm:main}. The degree of the polynomial $F$ (appearing in Theorem \ref{thm:main}) is equal to $\binom{2k}{2}\cdot 2=4k^2-2k$, but our aim is to show that $F \equiv 0$.
    
    Let $B=(a_0,a_1,a_2) \in \PP^2$ be in any line $L$, which passes exactly through $3$ points from $Z$.  Then, by Theorem \ref{thm:main}, the multiplicity of $B$ is at least
    \begin{multline*}
        \sum_{j=1}^{2k} h_{j,B} \geq h_{2k-1,B}=\dim(2k;(2k-1)B+Z)-\vdim(2k;(2k-1)B+Z)=\\
        \dim(2k;(2k-1)B+Z).
    \end{multline*}
    By B\'{e}zout's Theorem, each element of the system $\mathfrak{L}(2k;(2k-1)B+Z)$ contains the line $L$. Therefore,
    \begin{equation*}
        \dim(2k;(2k-1)B+Z)=\dim(2k-1;(2k-2)B+Z\setminus L) \geq \vdim(2k-1;(2k-2)B+Z\setminus L)=1,
    \end{equation*}
    which means that every line of this kind has to appear in the equation of $F$ at least once. 
    Let the point $B$ lie on a line $L$ that passes through exactly $4$ points from $Z$. We have
    \begin{equation*}
        \sum_{j=1}^{2k} h_{j,B}\geq h_{2k-2,B}+h_{2k-1,B}.
    \end{equation*}
    Observe that again from B\'{e}zout's Theorem, each element of the system $\mathfrak{L}(2k;(2k-2)B+Z)$ (resp. $\mathfrak{L}(2k;(2k-1)B+Z)$) contains the line $L$, thus
    \begin{multline*}
        h_{2k-2,B}=\dim(2k;(2k-2)B+Z)-\vdim(2k;(2k-2)B+Z) \geq \\
        \vdim(2k-1;(2k-3)B+Z\setminus L)-\vdim(2k;(2k-2)B+Z)=1.
    \end{multline*}
    and
    \begin{multline*}
        h_{2k-1,B}=\dim(2k;(2k-1)B+Z)-\vdim(2k;(2k-1)B+Z) =\dim(2k;(2k-1)B+Z) \geq \\
        \vdim(2k-1;(2k-2)B+Z\setminus L)=2.
    \end{multline*}
    So, any line of this type occurs in $F$ with multiplicity at least $3$.
    Finally, for a point $B$ on a line $L$, which contains $2k$ points, we have
    \begin{equation*}
        \sum_{j=1}^{2k} h_{j,B} \geq h_{2,B}+\ldots+h_{2k-1,B} \geq 1+2+\ldots+2k-2=\binom{2k-1}{2}.
    \end{equation*}
    where each of the number $h_{j,B}$, with $j=2,\ldots,2k-1$, can be obtained by proceeding in the same way, as we showed in the previous cases, i.e.
    \begin{multline*}
        h_{j,B}=\dim(2k;jB+Z)-\vdim(2k;jB+Z) \geq \vdim(2k-1;(j-1)B+Z\setminus L)-\vdim(2k;jB+Z) \\
        =j-1.
    \end{multline*}
    
If we sum all multiplicities of lines obtained in all cases, we can calculate the minimal degree of $F$ (to contain all the lines), which is
$$2k(k-1)\cdot 1+k \cdot 3+ 1 \cdot \binom{2k-1}{2}=4k^2-2k+1,$$
more than its actual degree. Thus $F \equiv 0$, as desired.
\end{example}
Example \ref{ex:A4k+1} shows an alternative way to prove what was known. For $k=1$, Example \ref{ex:A4k+1}  gives a purely combinatorial method of showing the existence of the famous quartic from \cite{CHMN16}, already mentioned in the Introduction. Such a quartic exists for 9 points in such a configuration that there are 3 lines with 4 points and 4 lines with 3 points. Then, the proof (with the use of Theorem \ref{thm:main}) boils down to checking that $3\cdot 3 +4 > 12$.
\begin{remark}
\label{rmk:hjBLine}
    There is an alternative way to obtain a lower bound for the values of the numbers $h_{j,B}$ , where $B$ is a point lying on a line $L$. This approach arises from the application of Bézout's Theorem. We can prove that $h_{j,B}$ is greater or equal to the number of points in excess required for $L$ to be an element of the system $\mathfrak{L}(d;jB+Z)$. If we denote by $|L|$ the number of points in the set $Z \cap L$, then
    \begin{multline*}
    \label{eq:hjB}
        h_{j,B}=\dim(d;jB+Z)-\vdim(d;jB+Z) \geq \\
        \vdim\left(d-1;(j-1)B+Z\setminus L\right)-\vdim(d;jB+Z) =
        j+ |L| -d -1,
    \end{multline*}
    for all $j$ which fulfil inequalities $m \geq j \geq d+2 - |L|$. Therefore, we have
    \begin{equation*}
    \label{eq:hBForL}
        \sum_{j=1}^m h_{j,B} \geq 1+2+\ldots+(m-d-2+|L|+1)=\binom{|L|+m-d}{2}.
    \end{equation*}
\end{remark}
\begin{example}
    \label{eq:FastExample}
    Consider another family of lines from Grünbaum's list \cite{Gru09}, denoted there as $A(13,3)$. As the set $Z$, we take the dual set of points corresponding to this family of lines. In \cite{DMO}, the authors claimed that the set $Z$ admits an unexpected curve of degree $d=6$ with a general point of multiplicity $m=5$. Their argument is based on computational calculations performed using the {\tt Singular} program. We apply Remark \ref{rmk:hjBLine} to provide a non-computer-based proof of the existence of this curve.

    Using Theorem \ref{thm:main}, we claim that $\deg F=\binom{6}{2}\cdot 2=30$. There are $10$, $3$, and $2$ lines passing through $3$, $4$, and $5$ points in $Z$, respectively. Thus, by Remark \ref{rmk:hjBLine}, the equation $F=0$ contains a set of lines whose total sum of multiplicities is  
    $$
    10\cdot \binom{2}{2}+3\cdot \binom{3}{2}+2\cdot \binom{4}{2}=31 > \deg F,
    $$
    which implies that $F \equiv 0$.
\end{example}
\begin{remark}
    We have prepared an external file that contains a constantly updated list of known curves and unexpected surfaces \cite{github}. In this file, one can find arrangements of lines, along with information about their weak combinatorics and whether our methods apply to the proof of a given case. All data about weak combinatorics or exponents of the arrangements listed in the file comes from \cite{Cuntz, Gru09, Janasz}. At the time of this paper's publication, $29$ examples and two infinite families of line arrangemet on the list had been proved using the method indicated in Example \ref{eq:FastExample}.     
\end{remark}
\begin{remark}
\label{rmk:hjBFixedPoint}
    We can provide a similar characterization of the lower bound for the values of the numbers $h_{j,B}$ when the point $B$ is fixed. To achieve this, let $|L|$ denote the number of points lying on a line $L$ that intersects the set $Z$.

    Next, suppose $B \not\in Z$ and that there exist $k \geq 1$ lines $L_i $ such that $B \in L_i$. Then, for any $j$, such that $m \geq j \geq d - |L_i| + 2 $, the following inequality holds
    \begin{footnotesize}
        \begin{equation*}
        h_{j,B}=\dim(d;jB+Z)-\vdim(d;jB+Z) \geq \vdim\left(d-k; (j-k)B+Z \setminus \bigcup_{i=1}^k (L_i \cap Z)\right)-\vdim(d;jB+Z).
        \end{equation*}
    \end{footnotesize}
    After performing basic computations, this simplifies to
    \begin{footnotesize}
        \begin{multline}
        \label{eq:sumFor1B}
        \frac{(d+2)(d-1)-k(2d+3)+k^2}{2}-\frac{(j+1)j-k(2j+1)+k^2}{2}-|Z|+\sum_{i=1}^k |L_i| \\
        - \frac{(d+2)(d-1)}{2}+\frac{(j+1)j}{2}+|Z|=k(j-d-1)+\sum_{i=1}^k |L_i|.
        \end{multline}
    \end{footnotesize}
    
    If $k=1$, then by formula \eqref{eq:sumFor1B}, we have
    $$
    \sum_{j=1}^m h_{j,B} = \sum_{j=d-|L|+2}^m h_{j,B} \geq 1+2+ \ldots +(m-d-1+|L|) = \binom{|L|+m-d}{2},
    $$
    which is exactly the same number as we obtain in Remark \ref{rmk:hjBLine} for points $B$ lying on the line $L$.
    
    For the case where $B\in Z \cap L_i$ and $j$ is within the range $m \geq j \geq d-|L_i|+3$, we carry out analogous computations. Specifically
    \begin{footnotesize}
        \begin{multline}
        \label{eq:sumFor1BWithB}
        h_{j,B} = \dim(d;jB+Z) - \vdim(d; jB + Z) \geq \\
        \vdim\left(d-k; (j-k)B+Z \setminus \bigcup_{i=1}^k (L_i \cap Z)\right)-\vdim(d;jB+Z)=k(j-d-2) + \sum_{i=1}^k |L_i| + 1.
        \end{multline}
    \end{footnotesize}
    Observe that for $j \in \{1,2,\ldots,d-|L_i|\}$, we always have a lower bound $h_{j,B} \geq 1$, as point $B$ is one of the points from $Z$, and therefore the matrix $M(B)$ contains two linearly dependent rows.

\end{remark}

        \section{Examples with a rectangular interpolation matrix}\label{sec:ex2}

In the next example we show a broader application of Theorem \ref{thm:main}. In this example, in order to show the existence of an unexpected curve, we need to compute the value $h_{j,B},$ for $j=m+1=d$. 
\begin{example}
\label{ex:sporadic}
Consider the dual set of points $Z$ for the so-called sporadic simplicial arrangements $A(30,3)$ (see e.g. \cite{Gru09}), which is shown in the adjacent figure. This set consists of $30$ points, $29$ black, and one highlighted red open circle. In order not to make the image unreadable, only some collinearities have been presented as dotted lines. Note that the circle represents the line at infinity.

\noindent
\begin{minipage}{0.50\textwidth}
 In \cite{DMO} authors claimed that set $Z$ admits unexpected curve in degree $d=14$ with a general point of multiplicity $m=13$. Their argument is based on computational calculations made in {\tt Singular} program. Theorem \ref{thm:main} provide tools to show existence of unexpected curve in this case, without the aid of a computer. As before, the proof is based only on the week combinatoric of $Z$.

 At the beginning we observe that the required number of points, for the matrix $M$ to be a square matrix, is
 $$
s=\binom{16}{2}-\binom{14}{2}=29,
$$
so we need to consider every subset $Z'$ of $Z$, which contains $29$ points, and show that for every such chose $\det M =F \equiv 0$.
\end{minipage}
\hfill
\begin{minipage}{0.45\textwidth}
   \begin{tikzpicture}[line cap=round,line join=round,>=triangle 45,x=1.0cm,y=1.0cm, scale=4]
\clip(-0.9,-0.8239966106878052) rectangle (0.8157985585252129,0.8892647506238589);
\draw [line width=0.4pt,dotted] (0.,0.) circle (0.8cm);
\draw [line width=0.4pt,dotted,domain=-0.7987933003025381:0.8157985585252129] plot(\x,{(-0.--0.3849001794597505*\x)/0.6666666666666666});
\draw [line width=0.4pt,dotted,domain=-0.7987933003025381:0.8157985585252129] plot(\x,{(-0.-3.*\x)/-1.7320508075688772});
\draw [line width=0.4pt,dotted,domain=-0.7987933003025381:0.8157985585252129] plot(\x,{(-0.-0.3849001794597505*\x)/0.6666666666666666});
\draw [line width=0.4pt,dotted,domain=-0.7987933003025381:0.8157985585252129] plot(\x,{(-0.-0.5773502691896258*\x)/0.3333333333333333});
\draw [line width=0.4pt,dotted] (0.,-0.8239966106878052) -- (0.,0.8892647506238589);
\draw [line width=0.4pt,dotted,domain=-0.7987933003025381:0.8157985585252129] plot(\x,{(-0.-0.*\x)/0.8});
\begin{scriptsize}
\draw [fill=black] (0.,0.) circle (0.5pt);
\draw [fill=black] (-0.16666666666666666,0.2886751345948129) circle (0.5pt);
\draw [fill=black] (0.16666666666666666,0.2886751345948129) circle (0.5pt);
\draw [fill=black] (0.16666666666666666,-0.2886751345948129) circle (.5pt);
\draw [fill=black] (-0.16666666666666666,-0.2886751345948129) circle (0.5pt);
\draw [fill=black] (0.16666666666666666,-0.09622504486493763) circle (0.5pt);
\draw [fill=black] (-0.16666666666666666,-0.09622504486493763) circle (0.5pt);
\draw [fill=black] (0.16666666666666666,0.09622504486493763) circle (.5pt);
\draw [fill=black] (-0.16666666666666666,0.09622504486493763) circle (0.5pt);
\draw [fill=black] (-0.3333333333333333,0.) circle (0.5pt);
\draw [fill=black] (0.3333333333333333,0.) circle (0.5pt);
\draw [fill=black] (0.,0.19245008972987526) circle (0.5pt);
\draw [fill=black] (0.,-0.19245008972987526) circle (0.5pt);
\draw [fill=black] (-0.3333333333333333,0.19245008972987526) circle (0.5pt);
\draw [fill=black] (-0.3333333333333333,-0.19245008972987526) circle (0.5pt);
\draw [fill=black] (0.3333333333333333,-0.19245008972987526) circle (0.5pt);
\draw [fill=black] (0.3333333333333333,0.19245008972987526) circle (0.5pt);
\draw [fill=black] (0.,0.09622504486493763) circle (0.5pt);
\draw [fill=black] (0.,-0.09622504486493763) circle (0.5pt);
\draw [fill=black] (0.08333333333333333,-0.048112522432468816) circle (0.5pt);
\draw [fill=black] (0.08333333333333333,0.048112522432468816) circle (0.5pt);
\draw [fill=black] (0.08333333333333333,0.048112522432468816) circle (0.5pt);
\draw [fill=black] (-0.08333333333333333,-0.048112522432468816) circle (0.5pt);
\draw [fill=black] (-0.08333333333333333,0.048112522432468816) circle (0.5pt);
\draw [fill=black] (0.,-0.3849001794597505) circle (0.5pt);
\draw [fill=black] (0.,0.8) circle (0.5pt);
\draw [fill=black] (0.8,0.) circle (0.5pt);
\draw [fill=black] (0.4,0.692820323027551) circle (0.5pt);
\draw [color=red] (0.6928203230275511,0.4) circle (0.7pt);
\draw [fill=black] (0.4,-0.692820323027551) circle (0.5pt);
\draw [fill=black] (0.6928203230275511,-0.4) circle (0.5pt);
\end{scriptsize}
\end{tikzpicture}
\end{minipage}
\vspace{0.1cm}

Consider the subset $Z'$ consisting of $29$ points, represented as black filled dots in the adjacent figure. We use Remark \ref{rmk:hjBLine} to obtain a lower bound for the values of $h_{j,B}$, where $B$ is a point lying on a line that contains at least $3$ points from $Z'$. For this purpose, we present a table that shows the number of lines $L_i$ passing through exactly $i$ points from the subset $Z'$, along with a lower bound for the sum of values of $h_{j,B}$ ($j=1,2,\ldots,13$), for each $i$-point line $L_i$.
\begin{table}[H]
\centering
    \begin{tabular}{|c|r|r|r|r|r|r|}
        \hline
         $i$ & $3$  & $4$  & $5$ & $6$ & $7$ & $8$ \\ \hline
         $\# L_i$ & $42$ & $18$ & $5$ & $0$ & $2$ & $1$ \\ \hline
         $\displaystyle \sum_{j=1}^{13} h_{j,B} \geq$ & $1$ & $3$ & $6$ & $10$ & $15$ & $21$ \\ \hline
    \end{tabular}
\end{table}
\noindent
The degree of $F$, as derived from Theorem \ref{thm:main}, is $\binom{14}{2}\cdot 2=182$. However, from the same theorem and the presented table, we also observe that $F$ consists of lines $L_i$, and the total sum of their multiplicities is given by
$$ 42 \cdot 1 + 18 \cdot 3 + 5 \cdot 6 + 2 \cdot 15 + 1 \cdot 21 = 177. $$

Now, let $B$ be in the position of the point removed from the set $Z$, which is marked as a red open circle in the figure. What we will show is that at this point, the order of vanishing is even greater than what we had previously determined. To see this, first observe that the $8$ lines passing through this point contain all the points from the set $Z'$. Since $B \not \in Z$, by Remark \ref{rmk:hjBFixedPoint}, we can use the same lower bound for the sum $\sum_{j=1}^{13} h_{j,B}$, as we obtained from the lines $L_i$ passing through $B$, such that $|L_i\cap Z'| \geq 3$. Therefore, by Remark \ref{rmk:hjBLine}
$$\sum_{j=1}^{13} h_{j,B} \geq \sum_{i=1}^{5} \binom{|L_i|-1}{2}=1+3+3+6+15=28.$$
Finally, we calculate 
$$h_{14,B}=\dim(14;14B+Z') \geq \dim(6; 6B)=7,$$
but since we cannot realize an additional multiplicity of $7$ at $B$ with a curve of degree $182-177=5$, we conclude that $F \equiv 0$, as desired.

To complete the proof of existence, we need to perform similar calculations for the remaining $29$ possible choices of the subset $Z'$. To avoid these tedious calculations, it is more convenient to use the following theorem.
\end{example}

\begin{theorem}
\label{thm:Zminus1pt}
    Let $d$ and $m=d-1$ be positive integers and let $P_1,\dots,P_s$ be pairwise distinct points in $\PP^2$, where $s=\binom{d+2}{2}-\binom{d}{2}+1$. Let $Z=\{P_1,\dots,P_s\}$ and let $|L|$ denote the number of points lying on a line intersecting the set $Z$. Then $Z$ admits an unexpected curve of type $(d,d-1)$ if
    $$
    \sum_{L_i} \binom{|L_i|-1}{2}+d-s+2 > 2\binom{d}{2},
    $$
    where the sum is over all lines $L_i$ such that $Z \cap L_i \neq \emptyset$ and $|L_i| \geq 3$.
\end{theorem}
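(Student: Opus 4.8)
The plan is to reduce the statement to an application of Theorem~\ref{thm:main} by showing that, for \emph{every} choice of the $s$-point subset forced by the square/rectangular matrix bookkeeping, the polynomial $F$ must vanish identically; then unexpectedness will follow from the dimension count built into the definition together with $m=d-1$. Here the situation is slightly different from Examples~\ref{ex:A4k+1} and~\ref{eq:FastExample}: since $s=\binom{d+2}{2}-\binom{d}{2}+1$, the matrix $M$ has one more row than column (it is the rectangular $M$ appearing after $M_{m}=M_{d-1}$ in the chain $M_1\subset\cdots\subset M_d$), so I should work with $\det$ of the relevant maximal square submatrices, or equivalently apply Theorem~\ref{thm:main} in the form that bounds the multiplicity of $B$ in $\{F=0\}$, with $\deg F=\binom{m+1}{2}(d-m+1)=2\binom{d}{2}$.

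First I would fix a general point $B$ and estimate $\mathrm{mult}_B\{F=0\}$ from below via $h_B=\sum_{j=1}^{d} h_{j,B}$. The contribution of the lines is handled exactly as in Remark~\ref{rmk:hjBLine}: every line $L_i$ with $|L_i|\geq 3$ such that $B\in L_i$ contributes at least $\binom{|L_i|+m-d}{2}=\binom{|L_i|-1}{2}$ to $h_B$ (using $m=d-1$), via the Bézout argument that forces $L_i$ to split off from each curve in $\mathfrak{L}(d;jB+Z)$ for $j\geq d+2-|L_i|$. Summing over all such lines through $B$ gives $h_B\geq \sum_{L_i\ni B}\binom{|L_i|-1}{2}$; taking the maximum over points $B$ and then, to get a statement about a \emph{generic} $B$, summing the line-multiplicities globally shows that $F$ would have to contain the divisor $\sum_{L_i}\binom{|L_i|-1}{2}\,[L_i]$. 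The extra terms $d-s+2$ come from the top-degree slot $j=m+1=d$: for $B$ lying on a line $L$ through $|L|$ points of $Z$ one also has $h_{d,B}=\dim(d;dB+Z)\geq \dim\big(d-1;(d-1)B\big)-(\text{points of }Z\text{ not on }L)=\binom{d+1}{2}-(s-|L|)$ after removing $L$, and a short count turns this into the claimed $d-s+2$ once one chooses $L$ to be a line through the appropriate number of points; more robustly, this reflects that $Z$ spans one fewer condition than a ``square'' configuration, so $\dim(d;Z)\geq 1$ already contributes and propagates. I would present the cleanest version: the global degree of the forced divisor is $\sum_{L_i}\binom{|L_i|-1}{2}$, and accounting for the single excess point of $Z$ (equivalently, that $\vdim(d;dB+Z)<0$ leaves slack) adds $d-s+2$.

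Then I would conclude: if $\sum_{L_i}\binom{|L_i|-1}{2}+d-s+2>2\binom{d}{2}=\deg F$, the forced vanishing of $F$ exceeds its degree, so $F\equiv 0$; hence $\mathfrak{L}(d;(d-1)B+Z)\neq\emptyset$ for general $B$, i.e.\ a curve of type $(d,d-1)$ through $Z$ with a general point of multiplicity $d-1$ exists. The dimension bookkeeping $s=\binom{d+2}{2}-\binom{d}{2}+1$ with $H_B(d)=\binom{d+1}{2}$ shows $\vdim(d;(d-1)B+Z)=0$, so any such curve is genuinely unexpected (after checking, as in the standard setup, that $Z$ imposes independent conditions in degree $d$, which is automatic here since $|Z|<\binom{d+2}{2}$ and the configuration is a reduced point set in linearly general enough position, or can be absorbed into the hypothesis). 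Finally, the point of the theorem over Example~\ref{ex:sporadic} is uniformity: because the bound is phrased purely in terms of the line-count $\sum_{L_i}\binom{|L_i|-1}{2}$ of the \emph{full} set $Z$ minus a correction, one does not need to revisit each of the $s$ deletions separately.

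\textbf{Main obstacle.} The delicate point is the constant $d-s+2$: it must be produced rigorously from the $j=d$ slot $h_{d,B}$ (and possibly the interplay with whichever point of $Z$ is omitted to make the submatrix square), uniformly over all admissible subsets and all positions of the generic $B$ relative to the line arrangement, without double-counting the contributions already attributed to the lines. Getting this additive term exactly right — rather than off by one — is where the real care is needed; the line contributions themselves are a routine repetition of Remark~\ref{rmk:hjBLine}.
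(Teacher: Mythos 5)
Your overall strategy (force $F\equiv 0$ by showing the line multiplicities plus an extra term exceed $\deg F=2\binom{d}{2}$, then read off unexpectedness from the dimension count) is the paper's strategy, but the step you yourself flag as the ``main obstacle'' is exactly the content of the theorem, and your sketch of it is not just incomplete but incorrect as stated. Since $|Z|=s$ is one more than the square-matrix count, the theorem is really about $Z'=Z\setminus\{P\}$ for an arbitrary $P\in Z$, and the line contributions must be computed with respect to $Z'$, not $Z$: a line $L_i$ through $P$ meets $Z'$ in only $|L_i|-1$ points, so Remark~\ref{rmk:hjBLine} gives it multiplicity $\binom{|L_i|-2}{2}$, not $\binom{|L_i|-1}{2}$. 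Your ``global divisor $\sum_{L_i}\binom{|L_i|-1}{2}[L_i]$'' therefore overcounts by $\sum_{L_i\ni P}(|L_i|-2)$, and this deficit depends on which point is deleted — so without repairing it you cannot get a bound that is uniform over all $s$ deletions, which is the whole point of the theorem over Example~\ref{ex:sporadic}.

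The paper's resolution, which your proposal does not find, is to take $B=P$ (the deleted point, now \emph{not} in $Z'$): Remark~\ref{rmk:hjBFixedPoint} recovers for $\sum_{j\le m}h_{j,B}$ the same bound as the lines through $P$ give, and the extra slot $h_{d,B}=\dim(d;dB+Z')$ is bounded below, after splitting off the $k$ lines through $P$ by B\'ezout, by $d-k-\bigl(s-1-\sum_{L_i\ni P}(|L_i|-1)\bigr)+1$. Adding this to the corrected line total, the $P$-dependent terms cancel identically:
\[
-\sum_{L_i\ni P}(|L_i|-2)\;-\;\sum_{L_i\ni P}1\;+\;\sum_{L_i\ni P}(|L_i|-1)\;=\;0,
\]
leaving exactly $\sum_{L_i}\binom{|L_i|-1}{2}+d-s+2$ independently of $P$. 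Your attempt to produce $d-s+2$ from $h_{d,B}$ for ``$B$ on a line through the appropriate number of points'' and from ``$\dim(d;Z)\ge 1$ propagating'' does not substitute for this cancellation, and as written the argument does not close. (A minor additional slip: $\vdim(d;(d-1)B+Z)=-1$, not $0$, with $s=\binom{d+2}{2}-\binom{d}{2}+1$; the unexpectedness conclusion is unaffected.)
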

\begin{proof}
The number of points in $Z$ is exactly one more than required. Let $P\in Z$ and consider the set $Z'=Z\setminus P$. The aim of the proof is to show that the total multiplicity of lines that $F=0$ must contain, combined with the additional multiplicity at $P$ which has to be realized by $F=0$, exceeds the expected degree of $F$.

 By Theorem \ref{thm:main} and Remark \ref{rmk:hjBLine}, for each line $L_i$, we have that the total multiplicity of lines contained in $F=0$ is greater or equal to
\begin{equation}
\label{eq:hjBLi}
    \sum_{L_i\not\ni P}\binom{|L_i| - 1}{2} + \sum_{L_i \ni P}\binom{|L_i|-2}{2} 
    = \sum_{L_i} \binom{|L_i|-1}{2} - \sum_{L_i \ni P} (|L_i| - 2).
\end{equation}

Now, let $B=P$. Since $B\not\in Z$, Remark \ref{rmk:hjBFixedPoint} allows us to apply the same lower bound to the sum $\sum_{j=1}^{m} h_{j,B}$ as the one derived from the lines $L_i$ passing through $B$. Consequently, we estimate $h_{d,B}$ where $F$ may have additional multiplicity, not covered by multiplicities of lines. By B\'{e}zout's Theorem, each element of the system $\mathfrak{L}(d;dB+Z \setminus B)$ contains lines $L_i$ such that $B \in L_i$. As a result,
\begin{footnotesize}
    \begin{align}
    \label{eq:hdBThm1pt}
        h_{d,B} &= \dim(d;dB+Z\setminus B) \geq \dim \left(d-\sum_{L_i \ni B} 1; \; dB+(Z \setminus \bigcup_{i=1}^k (L_i \cap Z))\cup B \right) \geq \\
        &\vdim \left(d-\sum_{L_i \ni B} 1; \; dB+(Z \setminus \bigcup_{i=1}^k (L_i \cap Z))\cup B \right)= d-\sum_{L_i \ni B} 1 - \left(s - \sum_{L_i \ni B} (|L_i|-1) - 1\right) + 1. \nonumber
    \end{align}
\end{footnotesize}
We see that if the sum \eqref{eq:hjBLi} and \eqref{eq:hdBThm1pt}, which is
\begin{multline*}
    \sum_{L_i} \binom{|L_i|-1}{2} - \sum_{L_i \ni B} (|L_i|-2) 
    + d - \sum_{L_i \ni B} 1 - \left(s - \sum_{L_i \ni B} (|L_i|-1)-1\right) +1 = \\
    \sum_{L_i} \binom{|L_i|-1}{2}+d-s+2,
\end{multline*}
is greater than $\deg F=2 \binom{d}{2}$, then $F\equiv 0$, as desired.
\end{proof}
\begin{remark}
    We apply Theorem \ref{thm:Zminus1pt} to conclude the argument in Example \ref{ex:sporadic}. The necessary combinatorial data are summarized in the table below, which shows the number of lines $L_i$ passing through exactly $i$ points
    \begin{table}[H]
    \centering
        \begin{tabular}{|c|r|r|r|r|r|r|}
            \hline
             $i$ & $3$  & $4$  & $5$ & $6$ & $7$ & $8$ \\ \hline
             $\# L_i$ & $44$ & $17$ & $6$ & $1$ & $1$ & $2$ \\ \hline
        \end{tabular}
    \end{table}
    \noindent
    Therefore, instead of examining all $29$ remaining possibilities, it suffices to verify whether
    \begin{equation*}
        \sum_{L_j} \binom{|L_j|-1}{2}+d-s+2 = 198 +14 -29 +2=185 > \deg F = 182.
    \end{equation*}
    This confirms that the inequality holds, completing the proof.
\end{remark}
\begin{example}
    Consider another line arrangement from the Gr\"unbaum list of simplicial arrangements \cite{Gru09}, namely $A(15,1)$. Denote by $Z$ the set of points dual to these lines. What we can prove is that $Z$ admits three different types of unexpected curves, of types $(6,5)$, $(7,6)$ and $(8,7)$.
    \vspace{0.2cm}

    \noindent
    \begin{minipage}{0.75\textwidth}
        \quad The following table shows the number of lines $L_i$ passing through exactly $i$ points ($i \geq 3$) from the set $Z$. In addition, every point from $Z$ lies on $2$ lines passing through $5$ points and $2$ lines passing through $3$ points.
    \end{minipage}
    \hfill
    \begin{minipage}{0.20\textwidth}
        \begin{center}
            \begin{tabular}{|c|r|r|}
                \hline
                 $i$ & $3$  &  $5$ \\ \hline
                 $\# L_i$ & $10$ & $6$ \\ \hline
            \end{tabular}
        \end{center}
    \end{minipage}

    \begin{description}
        \item [Case: $(d,m)=(6,5)$.] After removing one point from $Z$, we have the following weak combinatorics, shown in the adjacent table.

        \noindent
        \begin{minipage}{0.75\textwidth}
            We check that 
            $$\sum_{i=3}^5 \# L_i \binom{i-1}{2} +d-|Z|+2 =8+2\cdot 3+4 \cdot 6+6-14+2 = 32 $$ is greater than $2\cdot \binom{d}{2}=30$. By Theorem \ref{thm:Zminus1pt}, we obtain the assertion.
        \end{minipage}
        \hfill
        \begin{minipage}{0.20\textwidth}
            \begin{center}
                \begin{tabular}{|c|r|r|r|}
                    \hline
                     $i$ & $3$  &  $4$ & $5$ \\ \hline
                     $\# L_i$ & $8$ & $2$ & $4$\\ \hline
                \end{tabular}
            \end{center}
        \end{minipage}
        \item [Case: $(d,m)=(7,6)$.] In this case, it is sufficient to check, using Remark \ref{rmk:hjBLine}, that the total sum of the multiplicities of the lines contained in $F=0$ is
        $$\sum_{i=3}^5 \# L_i \binom{i-1}{2}= 10+6\cdot 6=46,$$
        which is bigger than $\deg F= 2\cdot \binom{d}{2}=42.$
        \item [Case: $(d,m)=(8,7)$.] In this case, $\vdim(8;7B+Z) = 2$, so it is sufficient to show that a curve exists when two additional points, $P_1$ and $P_2$, are added to the set $Z$. Such a curve does exist and consists of the unexpected curve of type $(6,5)$, along with two additional lines passing through $B$ and one of the points $P_i$.
    \end{description}
\end{example}
\begin{remark}
    Additional $5$ examples of using Theorem \ref{thm:Zminus1pt} can be found in our external file \cite{github}. 
\end{remark}

\section{Example of unexpected curves of type $(d+k, d)$, $k>1$}\label{sec:ex3}

 The next example shows the existence of two unexpected curves of type $(d+k, d)$, where $k>1$, for a kind of Fermat configuration. The curves were found experimentally and mentioned in \cite{KS,MT-G}.
An interesting fact here is that apart from the weak combinatorics of this arrangement (i.e., the points-lines connections) we use more, see 
a) and b) below.

\begin{example}\label{ex:F6-F3}
Let $F_m$ denote the Fermat configuration of points, which consists of the intersection points of the lines defined by the equation 
$$(x^m - y^m)(y^m - z^m)(z^m - x^m) = 0,$$
together with the set $T$ of three fundamental points. Our main focus here is on the configurations $F_3$ and $F_6$, where $F_3\subsetneq F_6$.

Consider a point $P \in F_3\setminus T$, and let $l_1,l_2,l_3$ be  three lines from the equation 
$$(x^3 - y^3)(y^3 - z^3)(z^3 - x^3) = 0,$$
that pass through $P$. Define $Z_1 \subset F_6$ as a configuration consisting of $20$ points obtained by removing from $F_6$ all points lying on these three lines. Furthermore, let $Z$ be a configuration of $30$ points taken from $(F_6\setminus F_3)\cup T$.

In the subsequent discussion, we use the following fact:
\begin{itemize}
    \item[a)] There exists a quartic $K$ passing through $Z_1$.
    \item[b)] Additionally, there is a $4$-dimensional family of quintics passing through $Z_1$.
\end{itemize}
The quartic in $(a)$ was found numerically. The family in $(b)$ can be constructed by taking $K$ and a line (this would give a 3-dimensional family), and by taking also 5 lines containing $Z_1$, see Figure \ref{fig:F6F3}.

\noindent
\begin{figure}
  \begin{minipage}[c]{0.50\textwidth}
    \definecolor{qqzzff}{rgb}{0.,0.6,1.}
    \definecolor{ffqqqq}{rgb}{1.,0.,0.}
    \begin{tikzpicture}[line cap=round,line join=round,>=triangle 45,x=1.0cm,y=1.0cm,scale=0.8]
    \clip(-3.49,-3.31) rectangle (7.2,7.2);
    \draw [line width=0.4pt] (1.,5.5)-- (1.,-0.5);
    \draw [line width=0.4pt] (-0.5,1.)-- (5.5,1.);
    \draw [line width=0.4pt] (-0.25,-0.25)-- (5.5,5.5);
    \draw [line width=0.4pt, dotted] (5.5,5.5)-- (6.75,6.75);
    \draw [line width=0.4pt] (7.25,7.25)-- (6.75,6.75);
    \draw [line width=0.4pt] (-3.25,1)-- (-2.75,1);
    \draw [line width=0.4pt, dotted] (-2.75,1)-- (0,1);
    \draw [line width=0.4pt] (1,-2.75)-- (1,-3.25);
    \draw [line width=0.4pt, dotted] (1,-2.75)-- (1,-0.5);
    \begin{scriptsize}
    \draw (0.95,1.05) node[anchor=north west] {$P$};
    \draw [fill=black] (0.,0.) circle (1.5pt);
    \draw [fill=black] (0.,1.) circle (1.5pt);
    \draw [fill=black] (0.,2.) circle (1.5pt);
    \draw [fill=black] (0.,3.) circle (1.5pt);
    \draw [fill=black] (0.,4.) circle (1.5pt);
    \draw [fill=black] (0.,5.) circle (1.5pt);
    \draw [color=ffqqqq] (1.,5.) circle (2.0pt);
    \draw [fill=black] (1.,4.) circle (1.5pt);
    \draw [color=ffqqqq] (1.,3.) circle (2.0pt);
    \draw [fill=black] (1.,2.) circle (1.5pt);
    \draw [color=ffqqqq] (1.,1.) circle (2.0pt);
    \draw [fill=black] (1.,0.) circle (1.5pt);
    \draw [fill=black] (2.,0.) circle (1.5pt);
    \draw [fill=black] (2.,1.) circle (1.5pt);
    \draw [fill=black] (2.,2.) circle (1.5pt);
    \draw [fill=black] (2.,3.) circle (1.5pt);
    \draw [fill=black] (2.,4.) circle (1.5pt);
    \draw [fill=black] (2.,5.) circle (1.5pt);
    \draw [color=ffqqqq] (3.,5.) circle (2.0pt);
    \draw [fill=black] (3.,4.) circle (1.5pt);
    \draw [color=ffqqqq] (3.,3.) circle (2.0pt);
    \draw [fill=black] (3.,2.) circle (1.5pt);
    \draw [color=ffqqqq] (3.,1.) circle (2.0pt);
    \draw [fill=black] (3.,0.) circle (1.5pt);
    \draw [fill=black] (4.,0.) circle (1.5pt);
    \draw [fill=black] (5.,0.) circle (1.5pt);
    \draw [fill=black] (4.,1.) circle (1.5pt);
    \draw [color=ffqqqq] (5.,1.) circle (2.0pt);
    \draw [fill=black] (5.,2.) circle (1.5pt);
    \draw [fill=black] (4.,2.) circle (1.5pt);
    \draw [fill=black] (4.,3.) circle (1.5pt);
    \draw [color=ffqqqq] (5.,3.) circle (2.0pt);
    \draw [fill=black] (4.,4.) circle (1.5pt);
    \draw [fill=black] (5.,4.) circle (1.5pt);
    \draw [fill=black] (4.,5.) circle (1.5pt);
    \draw [color=ffqqqq] (5.,5.) circle (2.0pt);
    \draw [fill=qqzzff] (1,-3.) ++(-2.5pt,0 pt) -- ++(2.5pt,2.5pt)--++(2.5pt,-2.5pt)--++(-2.5pt,-2.5pt)--++(-2.5pt,2.5pt);
    \draw [fill=qqzzff] (-3.,1) ++(-2.5pt,0 pt) -- ++(2.5pt,2.5pt)--++(2.5pt,-2.5pt)--++(-2.5pt,-2.5pt)--++(-2.5pt,2.5pt);
    \draw [fill=qqzzff] (7.,7.) ++(-2.5pt,0 pt) -- ++(2.5pt,2.5pt)--++(2.5pt,-2.5pt)--++(-2.5pt,-2.5pt)--++(-2.5pt,2.5pt);
    \end{scriptsize}
    \end{tikzpicture}
  \end{minipage}\hfill
  \begin{minipage}[c]{0.40\textwidth}
    \caption{
       In the adjacent figure, the points from the set $F_6$ are represented as black filled dots, while the points from $F_3$ are shown as red open circles. The points belonging to the set $T$ are depicted as blue squares. Additionally, for a representative point $P$, the lines $l_1, l_2, l_3$ passing through $P$ are also illustrated in the figure.
    } \label{fig:F6F3}
  \end{minipage}
\end{figure}
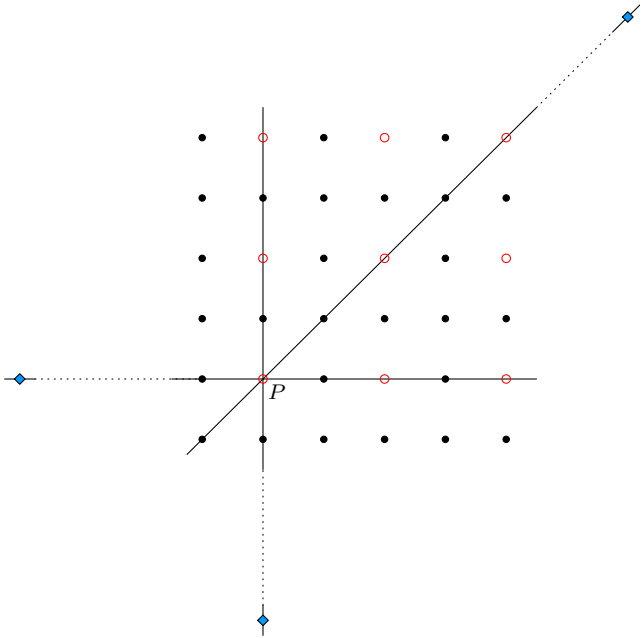
The points in $Z$ are classified into three types: points of type $T$, which belong to the set $T$; points of type $S$, which lie on the lines defining $F_6$ but not $F_3$; and points of type $W$, for which at least one of the defining lines passing through them is associated with $F_3$. It is also worth noting that the points in $Z$ lie on two types of lines: lines of type $L_7$, which contain $7$ points from $Z$, and lines of type $L_4$, which contain $4$ points from $Z$. 

Our goal is to establish the following claim:  
\begin{claim}  
    There exists an unexpected curve of type $(7,3)$ passing through $Z$.  
\end{claim}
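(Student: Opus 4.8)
The plan is to apply Theorem~\ref{thm:main} to the polynomial $F$ associated with the set $Z$ of $30$ points, for $(d,m)=(7,3)$, and to show that the total vanishing order of $F$ forced by the combinatorics of $Z$ exceeds the degree of $F$, hence $F\equiv 0$. First I would compute the numerology: for $N=2$, $d=7$, $m=3$ we have $\binom{d+2}{2}=36$ and $\binom{m+1}{2}=6$, so a square interpolation matrix requires $s=30$ points, which is exactly $|Z|$; and the degree of $F$ is $\binom{m+1}{2}(d-m+1)=6\cdot 5=30$. So the target is to exhibit a total forced multiplicity strictly greater than $30$.

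Next I would count the lines. By the description, $Z$ lies on lines of type $L_7$ (seven points of $Z$) and $L_4$ (four points of $Z$); I would determine how many lines of each type there are in the Fermat-type configuration $(F_6\setminus F_3)\cup T$ — this is the ``weak combinatorics'' input, to be read off Figure~\ref{fig:F6F3}. For a point $B$ on a line $L$ with $|L|=\ell$ points of $Z$, Remark~\ref{rmk:hjBLine} gives $\sum_{j=1}^{m} h_{j,B}\ge\binom{\ell+m-d}{2}=\binom{\ell-4}{2}$, which is $\binom{3}{2}=3$ for an $L_7$-line and $0$ for an $L_4$-line. So the $L_7$-lines alone contribute $3\cdot(\#L_7)$ to the degree of $F$. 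I expect $\#L_7$ to be large enough that this is already close to $30$ but not quite past it — which is why facts a) and b) are needed.

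The crucial extra input is that, unlike the pure line-counting examples, here $d-m=4>1$, so one must also exploit $h_{j,B}$ for small $j$ via auxiliary curves of higher degree, exactly as flagged in the introduction of Section~\ref{sec:ex3}. Concretely, for $B$ of type $W$ (a point through which one of the $F_3$-lines passes), after stripping the three lines $l_1,l_2,l_3$ through $B$ by Bézout one reduces $\mathfrak L(7;jB+Z)$ to a system of quartics (degree $7-3=4$) through the residual subscheme, and fact a) — existence of a quartic through $Z_1$ — together with fact b) — a $4$-dimensional family of quintics through $Z_1$ — forces $\dim\mathfrak L(7;jB+Z)$ to be positive for the relevant range of $j$, giving additional contributions to $h_B$ beyond the line count. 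I would tabulate, for each of the three point-types $T$, $S$, $W$, the resulting lower bound on $\sum_j h_{j,B}$, using Bézout plus a)/b) to bound $\dim\mathfrak L$ from below in each case.

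Finally I would add up: the lines $L_7$ (and any residual contribution from $L_4$-lines when $d-m$ lets $j$ reach the threshold) contribute a known total multiplicity, and the point-type analysis for $W$-points (and possibly $T$-points) contributes the rest; the claim is that the sum is $\ge 31>30=\deg F$, forcing $F\equiv 0$ and hence the existence of a curve of degree $7$ vanishing on $Z$ with a triple point at a general $B$. Checking unexpectedness is then a separate short verification that $Z$ imposes independent conditions on septics and that $\vdim(7;3B+Z)=0$, so any such curve is genuinely unexpected. The main obstacle I anticipate is not the bookkeeping but justifying the reductions in the $W$-case: one must be careful that removing the three lines $l_1,l_2,l_3$ really lands on exactly the subscheme $Z_1$ (or something containing it), so that facts a) and b) apply verbatim, and that the multiplicity-$(j-3)$ condition at $B$ on the residual quartic/quintic system is compatible with the dimension counts — i.e.\ that the auxiliary curves can be taken to pass through $B$ to the required order, which is where the explicit construction of the quintic family in b) (via $K$ plus lines through $B$) does the real work.
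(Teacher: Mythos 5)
Your overall framework (square matrix, $\deg F=\binom{m+1}{2}(d-m+1)=30$, force vanishing order beyond the degree) is the right starting point, but the core of your plan has a gap that the paper's proof is specifically designed to get around. The forced \emph{line} multiplicities here total only $9\cdot 3=27<30$ (nine lines of type $L_7$, each with $\sum_j h_{j,B}\ge\binom{3}{2}=3$; the $L_4$-lines contribute $\binom{0}{2}=0$), and you cannot close the remaining gap by adding forced multiplicities at isolated points to the line total and comparing the sum with $\deg F$: a plane curve of degree $30$ can contain the nine tripled lines \emph{and} have additional singular points on the residual cubic, so ``lines $+$ points $\ge 31$'' does not imply $F\equiv 0$. (Compare Example~\ref{ex:sporadic}, where the point contribution is only usable because the required multiplicity at a single point exceeds the degree of the residual curve.) What the paper actually does is pin $F$ down completely under the assumption $F\not\equiv 0$: the $T$-points are shown to satisfy $h_B\ge 11$, which after subtracting the contribution of the three $L_7$-lines through each forces a double point of the residual cubic $C$ at each of the three $T$-points, hence $C$ is the triangle of lines joining them. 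The contradiction then comes from exhibiting a point of the locus \emph{off} this explicitly known degree-$30$ curve: for $P\in F_3\setminus T$ (a point \textbf{not} in $Z$), the septic $l_1l_2l_3\cdot K$ lies in $\mathfrak{L}(7;3P+Z)$, so $P\in\{F=0\}$, yet $P$ lies on no $L_7$-line and not on $C$.

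This also means you have misplaced the role of facts a) and b). Fact a) (the quartic $K$ through $Z_1$) is not used to boost $h_{j,B}$ at the $W$-points of $Z$ after stripping lines; it is used once, at the very end, to build the explicit septic $l_1l_2l_3K$ with a triple point at the external point $P\in F_3\setminus T$. Fact b) (the quintics) is not needed for the $(7,3)$ claim at all --- it enters only in the $(8,5)$ claim. As written, your Bézout reduction at $W$-points would not produce usable contributions anyway (the $F_3$-lines meet $Z$ in too few points to be stripped), and even if it did, the resulting point multiplicities could not be added to the line count for the reason above. To repair your argument you would need to replace the ``sum to $31$'' step by the two-stage argument: (i) identify the residual cubic exactly via the $h_B\ge 11$ estimate at the $T$-points, and (ii) produce a point of the locus outside the resulting candidate for $\{F=0\}$.
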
  
\begin{proof}
    Let $B$ be a point on the line of type $L_7$. Then, by Remark \ref{rmk:hjBLine}, we have 
    $$\sum_{j=1}^3 h_{j,B} \geq \binom{7+3-7}{2}=3.$$ 
    From Theorem \ref{thm:main}, we have that $\deg F = 30$. Thus, $\{F=0\}$ consists of $9$ lines, with each line of type $L_7$ counted three times, and a cubic $C$, or $F \equiv 0$.

    Now, place the point $B$ at any point of type $T$. By Remark \ref{rmk:hjBFixedPoint}, we have that $h_{1,B}, h_{2,B}$ are both greater or equal $1$, and $h_{3,B}\geq 3 \cdot (3-7-2) + 21+1 = 4$. By B\'{e}zout's Theorem, all three lines $L_7$ passing through $B$ are elements of the system $\mathfrak{L}(7; 4B+Z)$. Let $Z'$ denote the set of points from $Z$ after removing these three lines. It follows that
    $$h_{4,B}=\dim(7; 4B+Z) \geq \dim(4; 1B+Z') \geq \vdim(4; 1B+Z')=3.$$
    Since three lines of type $L_7$ and three of type $L_4$ passing through $B$ are elements of $\mathfrak{L}(7; 5B+Z)$, we can use a similar argument to deduce
    $$h_{5,B}=\dim(7; 5B+Z) \geq \dim(1; 0B+Z'') \geq \vdim(1; 0B+Z'')=1,$$
    and analogously $h_{6,B} \geq 1,$
    where $Z''=T\setminus \{B\}.$ Therefore, $h_B \geq 11$, and by Theorem \ref{thm:main}, each point of type $T$ is a double point on the cubic $C$ and this cubic equals to three lines through the points of $T$. So far, if $F\not\equiv 0$,  we know  $F$.

    Take the point $P$ from $F_3$ and a curve of degree seven, consisting of lines $l_1,l_2,l_3$ and the quartic $K.$ This septic has a triple point in $P$, so $P$ is an element of the locus, a contradiction, as $P$ lies neither on lines $L_7$ nor on $C$. Thus, $F\equiv 0$.
\end{proof}

\begin{question}
    Is it possible to find another proof of the Theorem, combinatorial and not using the existence of $K$.
\end{question}

\begin{claim}  
    There exists an unexpected curve of type $(8,5)$ passing through $Z$.  
\end{claim}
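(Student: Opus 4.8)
The parameters are $d=8$, $m=5$, and the number of points making $M$ square is $s=\binom{10}{2}-\binom{6}{2}=30=|Z|$, so here $M$ is already square for $Z$ itself and, by Theorem \ref{thm:main}, $F$ has degree $\binom{6}{2}\cdot 4=60$. The plan is the same as for the previous claim: to prove $F\equiv 0$. I will use the weak combinatorics of $Z$ recorded there, namely that $Z$ carries $9$ lines of type $L_7$ (these being exactly the lines of $F_6$ not lying in $F_3$), that each point of type $T$ lies on $3$ of them and on $3$ lines of type $L_4$, and that the $6$ lines through a point of type $T$ cover all of $Z$ except the other two points of type $T$.

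First I would pin down forced components of $\{F=0\}$. By Remark \ref{rmk:hjBLine}, a point $B$ on a line $L_7$ satisfies $\sum_{j=1}^{5}h_{j,B}\geq\binom{7+5-8}{2}=\binom{4}{2}=6$, so each of the $9$ lines of type $L_7$ is a component of $\{F=0\}$ of multiplicity $\geq 6$, accounting for degree $\geq 54$. Then I would bound the vanishing order of $F$ at a point $T_0$ of type $T$: since $T_0\in Z$ we get $h_{j,T_0}\geq 1$ for $j=1,2,3$; Remark \ref{rmk:hjBFixedPoint}, applied to the three $L_7$ lines through $T_0$, gives $h_{4,T_0}\geq 4$ and $h_{5,T_0}\geq 7$; and for $j=6,7,8$ every element of $(8;jT_0+Z)$ is by B\'ezout divisible by all six lines through $T_0$, after removal of which a conic through the two other points of type $T$ remains, so $h_{6,T_0}\geq 4$, $h_{7,T_0}\geq 3$, $h_{8,T_0}\geq 1$. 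By Theorem \ref{thm:main} the multiplicity of $T_0$ in $\{F=0\}$ is then at least $h_{T_0}\geq 1+1+1+4+7+4+3+1=22$.

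The step I expect to be the main obstacle is the B\'ezout bookkeeping that turns this data into an exact description of $F$. Suppose $F\not\equiv 0$ and write $F=\ell_1^{e_1}\cdots\ell_9^{e_9}\,G$ with $\ell_1,\dots,\ell_9$ the $L_7$ lines, $e_j\geq 6$, and $G$ coprime to every $\ell_j$; then $\deg G=60-\sum e_j\leq 6$. For a point $T_i$ of type $T$ set $E_i=\sum_{T_i\in\ell_j}e_j\geq 18$; the three $\ell_j$ through $T_i$ are distinct, so $T_i$ occurs in $\ell_1^{e_1}\cdots\ell_9^{e_9}$ with multiplicity $E_i$, forcing the multiplicity of $G$ at $T_i$ to be $\geq 22-E_i$. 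The lines $r_{12},r_{13},r_{23}$ joining the three points of type $T$ are the coordinate lines (these points being the fundamental points), and B\'ezout forces $r_{ii'}\mid G$ as soon as the multiplicities of $G$ at $T_i$ and $T_{i'}$ sum to more than $\deg G$. Running through $\deg G=6,5,\dots,0$ with $E_1+E_2+E_3=\sum e_j\geq 54$ and each $E_i\geq 18$, every case collapses except $e_1=\dots=e_9=6$, $\deg G=6$; there $G$ is divisible by $r_{12}r_{13}r_{23}$, and one more application of B\'ezout to the residual cubic forces $G=c\,(r_{12}r_{13}r_{23})^2$ with $c\neq 0$. Hence $F=c\,(xyz)^2\ell_1^{6}\cdots\ell_9^{6}$, so $\{F=0\}$ is precisely the union of the $9$ lines of type $L_7$ and the three coordinate lines.

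Finally I would contradict this using fact (b), exactly as the quartic $K$ was used in the previous claim. Pick $P\in F_3\setminus T$ and let $l_1,l_2,l_3$ be the three lines of $F_3$ through $P$; as before they are lines of type $L_4$, they pass through three distinct points of type $T$, and $l_1\cup l_2\cup l_3$ meets $Z$ in exactly $12$ points (all of $T$ among them), so $Z\setminus(l_1\cup l_2\cup l_3)\subseteq Z_1$. By fact (b) the quintics through $Z_1$ form a family of affine dimension $\geq 4$, and a double point at $P$ imposes at most three linear conditions, so there is a nonzero quintic $Q$ through $Z_1$ with $\mult_P Q\geq 2$; then $l_1l_2l_3\,Q$ is a nonzero octic with $\mult_P(l_1l_2l_3Q)=3+\mult_PQ\geq 5$ vanishing on all of $Z$ (the lines cover the $12$ points of $Z$ on $l_1\cup l_2\cup l_3$, and $Q$ covers $Z_1\supseteq Z\setminus(l_1\cup l_2\cup l_3)$), so $(8;5P+Z)\neq\emptyset$ and thus $F(P)=0$. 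But $P$ is a triangle point of $F_6$, hence has all coordinates nonzero and lies on no line of $F_6$ other than $l_1,l_2,l_3$; in particular it lies on none of the $L_7$ lines and on no coordinate line, contradicting the description of $\{F=0\}$ just obtained. Therefore $F\equiv 0$, so $(8;5B+Z)\neq\emptyset$ for a general $B$; since $\vdim(8;5B+Z)=0$ and (as one checks) $Z$ imposes independent conditions on octics, the resulting curve is unexpected.
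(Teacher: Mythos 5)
Your proof is correct and follows essentially the same route as the paper: multiplicity $\geq 6$ along each $L_7$ line via Remark \ref{rmk:hjBLine}, multiplicity $\geq 22$ at the fundamental points via Remark \ref{rmk:hjBFixedPoint} and the residual conic through $Z''=T\setminus\{B\}$, identification of the residual sextic as the doubled coordinate triangle, and the final contradiction at $P\in F_3\setminus T$ using $l_1l_2l_3Q$ with $Q$ a quintic through $Z_1$ singular at $P$. Your only deviation is that you carry out the B\'ezout bookkeeping more carefully than the paper does (ruling out the a priori possibility that some $L_7$ line occurs with multiplicity greater than $6$), which is a welcome tightening rather than a different argument.
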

\begin{proof}
    Let $B$ be a point on a line of type $L_7$. According to Remark \ref{rmk:hjBLine}, we have:  
    $$\sum_{j=1}^5 h_{j,B} \geq \binom{7+5-8}{2} = 6.$$ 
    It follows that $\{F = 0\}$ consists of $9$ lines, where each line of type $L_7$ is counted six times, along with a sextic curve $S$, or $F \equiv 0$.

    Let the point $B$ be any point of type $T$. By Remark \ref{rmk:hjBFixedPoint}, for $j \in \{1,2,3\}$, we have $h_{j,B} \geq 1$ and 
    $$h_{4,B}\geq 3\cdot (4-8-2)+21+1=4, \quad h_{5,B}\geq 3\cdot (5-8-2)+21+1=7.$$
    If $j\in \{ 6,7,8\}$, we observe that all six lines passing through $B$ are components of elements in $\mathfrak{L}(8;jB+Z)$. Therefore,
    $$h_{6,B}\geq \vdim(2;0B+Z'')=4, \quad h_{7,B}\geq \vdim(2;1B+Z'')=3, \quad h_{8,B}\geq \vdim(2;2B+Z'')=1,$$
    where again $Z''=T\setminus \{B\}.$ Thus $h_B=\sum h_{j,B}\geq 22$. From Theorem \ref{thm:main}, we have that $\deg F = 60$, and therefore $B$ is a fourfold point of $S$. This implies that the sextic $S$ consists of three lines through the points of $T$ taken twice.

    Take now the point $P$ from $F_3$ and a curve of degree $8$, consisting of lines $l_1,l_2,l_3$ and a quintic $Q$ through $Z_1$ and singular in $P$. Such $Q$ exists, as there exists a $4-$dimensional family of quintics through $Z_1$. This octic has a quintuple point in $P$, so $P$ is an element of the locus, a contradiction, as $P$ lies neither on lines $l_1, l_2, l_3$ nor on $S$. Thus, $F\equiv 0$.
\end{proof}  
\end{example}
    
\section{In $\PP^3$} 

Below we show how our method (i.e., investigating the locus of the interpolation matrix) works in $\PP^3$.

\begin{example}\label{ex:D4}

Take the $D4$ configuration of points in $\PP^3$. This configuration consists of $12$ points lying on $12$ planes, with each plane containing $6$ points and each point belonging to $6$ planes (see, e.g., \cite{HMN}). For this configuration, an unexpected hypersurface exists; it is a cone $(3; 3B+D4)$ (see \cite{HMNT, HMN}).  In our proof of the existence of the unexpected cone we again use only combinatorial data. 

To ensure that the interpolation matrix is square, we remove (any) two points from the configuration. Denote the remaining set of points $Z'$. The zero-locus of the determinant of the interpolation matrix, denoted by $\{F=0\}$, has degree $10$ (or is identically zero).  

Let $n_i$ denote the number of points on each of the $12$ planes, where $n_i$ is counted separately for the $i$-th plane. Clearly, before removing two points, we have $n_i=6$. After this operation, each $n_i$ can be either $4$, $5$, or $6$. Observe that each of the $12$ planes belongs to the locus $\{F=0\}$ with multiplicity $n_i-4$. Indeed, for a generic $B$ on a plane, $$h_{3,B} \geq \dim(3; 3B+Z')-\vdim(3; 3B+Z') = \dim(2;2B+(10-n_i) \text{ points} )=n_i-4.$$ Moreover, after removing two points, the sum of all $n_i$ is $60$. This can be seen as follows: initially, the sum of all $n_i$ is $6\cdot 12=72$. Since each removed point lies on $6$ planes, the total sum decreases by $2\cdot 6 = 12$, yielding a final sum of $60$.  

Summarizing, the total multiplicity of the planes in the locus is given by  
$$
\sum_{i=1}^{12}(n_i-4)=60-48=12.
$$ 
Since the locus $\{F=0\}$ has degree $10$ (or is identically zero), we conclude that $F\equiv 0$. Thus, the hypersurface $(3,3B+D4)$ exists.  

\begin{corollary}  
There exists a four-dimensional family of hypersurfaces $(4,4B+D4).$  
\end{corollary}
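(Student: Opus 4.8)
The plan is to show that $\mathcal{L}(4;4B+D4)$ has dimension at least $4$ (in fact exactly $4$) for a general point $B$, where, as throughout the paper, $\dim\mathcal{L}$ denotes the dimension of the corresponding space of forms. The content is the lower bound: the reverse inequality $\dim\mathcal{L}(4;4B+D4)\le 4$ is routine once one knows, from Example~\ref{ex:D4}, that the $12$ points of $D4$ project from $B$ onto the \emph{unique} plane cubic $\overline{C_0}$ cut out by the cone $C_0$; indeed every quartic through those $12$ points either contains $\overline{C_0}$ or, by B\'ezout, meets it in exactly those $12$ points, and the two cases together contribute at most $3+1$ to the dimension.

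First I would record the automatic estimate $\dim\mathcal{L}(4;4B+D4)\ge 3$: for every plane $H$ through $B$ the quartic $C_0\cup H$ has multiplicity $3+1=4$ at $B$ and passes through $D4$, and as $H$ ranges over the $3$-dimensional space of planes through $B$ these products span a $3$-dimensional subspace, multiplication by the fixed form $C_0$ being injective. The task is then to exhibit one quartic of $\mathcal{L}(4;4B+D4)$ lying outside $C_0\cdot\{\text{planes through }B\}$.

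For this I would use a sharper feature of $D4$ than the $12$-planes / $6$-points incidences used in Example~\ref{ex:D4}: the $12$ points of $D4$ lie by threes on four lines $\ell_1,\ell_2,\ell_3,\ell_4$ (in the root-system description of $D4$ these four lines are the four $A_2$-subsystems, and that the $24$ roots of $D_4$ partition into four $A_2$'s is a short sign check). Writing $\Sigma_k$ for the plane spanned by $B$ and $\ell_k$, the quartic $\Sigma_1\cup\Sigma_2\cup\Sigma_3\cup\Sigma_4$ has multiplicity $4$ at $B$ (four planes through $B$) and vanishes on all of $D4$ (three points on each $\Sigma_k$), so it belongs to $\mathcal{L}(4;4B+D4)$. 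It cannot equal $C_0\cdot H$: unique factorization would force $C_0$ to be the product of three of the $\Sigma_k$, but that product vanishes only on the nine points carried by those three lines and not on the three points of the fourth line (for general $B$). Hence $\dim\mathcal{L}(4;4B+D4)\ge 4$, and together with the upper bound we get equality, which proves the corollary.

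The delicate point is precisely the geometric input of the previous paragraph — producing the extra quartic, equivalently checking that the $12$ points of $D4$ genuinely split into four collinear triples (and, for a fully self-contained argument, reading this off the definition of the configuration rather than quoting the root system). An alternative in the style of the earlier sections would be to add three generic points $P_1,P_2,P_3$ and show $\mathcal{L}(4;4B+D4\cup\{P_1,P_2,P_3\})\neq 0$ for general $B$; since generic points impose independent conditions, nonemptiness after three of them forces $\dim\mathcal{L}(4;4B+D4)\ge 4$. But by B\'ezout that nonemptiness is again equivalent to the twelve projected points being cut on $\overline{C_0}$ by a quartic, i.e.\ to the same special position of $D4$, so one cannot conclude using only the weak $12/6$ incidence data — this special incidence is the real obstacle.
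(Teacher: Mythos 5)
Your proposal is correct and follows essentially the same route as the paper: the three-dimensional subfamily $C_0\cdot\{\text{planes through }B\}$ coming from the unexpected cubic cone, plus the extra quartic given by the cone over the four lines carrying the $12$ points of $D4$ in collinear triples. You merely add details the paper leaves implicit (the unique-factorization check that the four-plane quartic is not of the form $C_0\cdot H$, and the root-system identification of the four lines); the upper-bound discussion is not needed for the statement.
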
  

Indeed, from the previous considerations, we immediately obtain a three-dimensional family, consisting of $(3,3B+D4)$ together with a plane passing through $B$. To obtain a four-dimensional family, we need one more quartic outside this three-dimensional space. The existence of such a quartic follows from the fact that the $12$ points of $D4$ lie on $4$ lines. Consequently, a cone spanned by $B$ and these lines forms the desired quartic. 

\end{example}

\begin{example}\label{ex:halfPenrose}

The Penrose configuration is described, for example, in  \cite{geproci, geproci1}. It consists of 40 points, given by the ideal $I = (xyzw, w(x^3 -y^3 + z^3), z(x^3 + y^3 + w^3), y(-x^3 + z^3 + w^3), x(y^3 + z^3 -w^3))$. We consider here a subconfiguration $Z$, consisting of 20 points (with coordinates written explicitly in \cite{geproci}), lying on 20 planes, 8 points on each plane. There are lines
defined by these planes, namely lines of type $l_j$, where $j$ planes intersect, for $j=2,3,4$. We want to prove the existence of (an unexpected) cone $(4, 4B+Z)$. We will only sketch the idea of the proof, omitting the parts where the combinatorics of $Z$ must be meticulously investigated.

Let $\cal P$ be a plane chosen among the 20 planes. Let $Z'=Z\setminus \cal P$ (12 points) and let
 $Z''=Z'\setminus \{\textrm{any two points of }Z'\}$. Observe that 
the degree of the locus of $(3, 3B+Z'')$ is 10. 

\begin{claim}\label{cla:prostewlocusie}

Let $B\in \cal P$. For $j=2,3,4$, all the lines $l_j$ on $\cal P$ are in the locus of $(3, 3B+Z'')$.
\end{claim}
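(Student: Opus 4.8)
The plan is to deduce the claim from Theorem~\ref{thm:main}. Write $F$ for the determinant of the square interpolation matrix of the system $(3;3B+Z'')$, so that the locus of $(3;3B+Z'')$ is $\{F=0\}$, a surface of degree $10$ in $\PP^3$ (or all of $\PP^3$). Fix a line $l$ of type $l_j$, $j\in\{2,3,4\}$, contained in $\mathcal P$; since $l$ is irreducible and $\{F=0\}$ is closed, it suffices to prove $F(B)=0$ for a \emph{general} point $B\in l$. For such a $B$, the index $j=3$ in Theorem~\ref{thm:main} gives
\[
\operatorname{mult}_B\{F=0\}\ \ge\ h_B\ \ge\ h_{3,B}\ =\ \dim(3;3B+Z'')-\vdim(3;3B+Z'')\ =\ \dim(3;3B+Z''),
\]
because $\vdim(3;3B+Z'')=\binom{6}{3}-\binom{5}{3}-|Z''|=20-10-10=0$. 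Thus the claim reduces to: for a general $B\in l$ there is a nonzero cubic with a triple point at $B$ vanishing along $Z''$.

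Such a cubic I would assemble from the planes through $l$. The line $l$ lies on exactly $j$ of the $20$ configuration planes, one of which is $\mathcal P$; call the others $\mathcal Q_2,\dots,\mathcal Q_j$. Each $\mathcal Q_i$ passes through $B$ (as $B\in l$) and contains $8$ points of $Z$, and since two distinct planes through $l$ meet exactly in $l\subset\mathcal P$, the sets $Z'\cap\mathcal Q_i$ (here $Z'=Z\setminus\mathcal P$) are pairwise disjoint, each of size $8-t$, where $t:=|Z\cap l|$. From $|Z'|=12$ one gets $(j-1)(8-t)\le 12$, which for $j=4$ forces $t\ge 4$; one checks from the explicit coordinates of $Z$ that in fact $t=4$ for every $l_4$-line meeting $\mathcal P$, so that $\mathcal Q_2,\mathcal Q_3,\mathcal Q_4$ partition $Z'$ into three $4$-point blocks. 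Then $C:=\mathcal Q_2\cup\mathcal Q_3\cup\mathcal Q_4$ is a cubic with a triple point at $B$ that contains $Z'\supseteq Z''$, settling the case $j=4$.

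For $j=2,3$ there are too few configuration planes through $l$ to cover $Z'$, and I would instead run the B\'ezout mechanism behind Remark~\ref{rmk:hjBLine}, but with planes in the role played there by lines. If $\mathcal Q$ is a plane with $B\in\mathcal Q$ and $|Z''\cap\mathcal Q|\ge 4$, then every $C'\in(3;3B+Z'')$ must contain $\mathcal Q$: otherwise $C'\cap\mathcal Q$ is a plane cubic with a point of multiplicity at least $3$ at $B$, hence a union of three lines through $B$, yet it also passes through $\ge 4$ points of $Z''\cap\mathcal Q$, which for general $B\in l$ determine $\ge 4$ distinct lines through $B$ --- a contradiction. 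Since $|Z''\cap\mathcal Q_i|\ge (8-t)-2=6-t$, this applies as soon as $t\le 2$, and then forces $\mathcal Q_2$ (and, for $j=3$, also $\mathcal Q_3$) into every member of $(3;3B+Z'')$. The residual is then a plane (when $j=3$) or a quadric cone with vertex $B$ (when $j=2$) that has to pass through $B$ and through the few points of $Z''$ off the forced planes; conversely, such a residual can be written down directly, since those residual points are so few that, together with $B$, they span at most a plane --- respectively project from $B$ onto a conic. Either way $(3;3B+Z'')\ne 0$. Sub-cases with larger $t$, or with an unlucky choice of the two deleted points, are handled the same way after first absorbing $\mathcal P$ itself as a component, $C=\mathcal P\cup\mathcal Q_2\cup\mathcal Q_3$, which already covers $Z''$ whenever $Z'\subset\mathcal Q_2\cup\mathcal Q_3$.

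The real work --- and the point I expect to be the main obstacle --- is the combinatorial bookkeeping feeding these estimates: for each of the $20$ planes $\mathcal P$ and each line $l_j\subset\mathcal P$ one must read off from the explicit coordinates of $Z$ the value $t=|Z\cap l|$ and the distribution of $Z'$ over the planes through $l$, and then verify the quantitative inputs above ($|\mathcal Q_i\cap Z''|\ge 4$, the residual-point counts, and the coplanarity / conic conditions), doing so uniformly over the two points deleted to form $Z''$, exactly in the spirit of Theorem~\ref{thm:Zminus1pt}. None of this is conceptually hard, but it is long, which is why it is the part of the argument we suppress here.
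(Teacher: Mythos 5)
Your reduction of the claim to exhibiting, for a general $B\in l$, a nonzero cubic with a triple point at $B$ vanishing on $Z''$ is correct, and your treatment of the types $l_4$ and $l_3$ --- take the union of the configuration planes through $l$ other than $\mathcal P$, and for $l_3$ adjoin the plane spanned by $B$ and the two leftover points of $Z'$ --- is essentially the paper's own argument. (The ``forcing'' digression, showing that every member of $(3;3B+Z'')$ must contain $\mathcal Q_2$, is irrelevant to existence and can be dropped.)

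There is, however, a genuine gap in the type $l_2$ case. Here the single extra configuration plane $\mathcal P'$ through $l$ absorbs only $8-t=6$ points of $Z'$, leaving a residual set $S$ of $6$ points of $Z'$ off $\mathcal P'$ (and $Z''$ may still contain all $6$ of them, e.g.\ when both deleted points lie on $\mathcal P'$). Your assertion that these residual points ``are so few that they project from $B$ onto a conic'' is unjustified: six points of $\PP^2$ do not in general lie on a conic, and indeed the system $(2;2B+S)$ of quadric cones with vertex $B$ through $S$ has virtual (affine) dimension $10-4-6=0$, so its nonemptiness is itself an ``unexpected'' statement that must be proved. This is exactly where the paper does the real work: it runs a second, nested locus argument, noting that the locus of points $B$ for which $(2;2B+S)\neq 0$ is a surface of degree $4$ (or all of $\PP^3$), and that the combinatorics of $Z$ furnishes five planes each containing four of the six points of $S$; for each such plane $\pi$, the union of $\pi$ with the plane through the remaining two points and $\pi\cap l$ is a reducible member of $(2;2(\pi\cap l)+S)$, so the degree-$4$ locus meets $l$ in five distinct points and hence contains $l$. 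Without this step (or some substitute explaining why the six points of $S$ become coconic after projection from every point of $l$), your argument does not establish the claim for the lines of type $l_2$.
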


The proof goes by considering lines according to their type.

For $l$ of type $l_2$, let $\cal P$ and $\cal P'$ be two planes defining $l$. Take $B\in l$.
There is a set $S$ of 6 points of $Z'$ outside $\cal P'$. We are interested in  $(2;2B+S).$
 The degree of the locus of $(2;2B+S)$ (i.e., of the determinant $F$ of the interpolation matrix)  is 4. The combinatorics of $Z$ implies that there exist five planes,
with four points out of these 6 on each. The 5 planes intersect $l$ in 5 different points. Thus, any such plane $\pi$ together with another plane (defined by the remaining two points out of the 6 points and by the intersection point $\pi\cap l$) gives an element of $(2; 2B+S).$ Thus, the locus $\{F=0\}$ of degree 4 intersects
$l$ in 5 points, so $l\subset \{F=0\}$.

Take now a point $B\in l$ and
any 10-point subset $Z''\subset Z'$. Observe that $(2;2B+S)+\cal P'$ is a member of ${\cal L}(3; 3B+Z')$, so also a member of ${\cal L}(3; 3B+Z'')$.

For $l$ of type $l_3$ take $B\in l\subset \cal P$. As $l$ is in two other planes and (combinatorics) the two planes contain 10 points of
$Z'$,  we may take a plane through the two remaining points and $B$, obtaining a member of ${\cal L}(3; 3B+Z')$ with $B\in l$. In consequence, $l$ is in the locus of $(3; 3B, Z'')$.

For $l$ of type $l_4$ the claim is easy. Take $B\in l\subset \cal P$. As $l$ is in three other planes, and all points of $Z'$ lie on these planes,
 we get a member of ${\cal L}(3; 3B+Z')$ with $B\in l$, and thus $l$ is in the locus of $(3; 3B, Z'')$.

Thus, all $l_2, l_3, l_4$ are in the locus of 
$(3; 3B+Z'')$ on $\cal P.$

We also claim that the following is true:
\begin{claim}\label{cla:Pwlocusie}
    Let $B\in \cal P$. Then $\cal P$ is contained in the locus of $(3; 3B+Z'')$.
\end{claim}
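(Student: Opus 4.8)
The plan is to count how many times $\cal P$ itself must divide the determinant $F$ of the interpolation matrix for the system $(3;3B+Z'')$, by applying Theorem \ref{thm:main} to a point $B$ lying on $\cal P$. Recall $Z''$ has $10$ points (it is a $10$-point subset of $Z'=Z\setminus\cal P$, which has $12$ points), so the matrix $M$ for cubics with a triple point is square and $\deg F=\binom{3+2}{2}\cdot 2=10$... wait, but we are in $\PP^3$: here $\deg F=\binom{m+N-1}{N}(d-m+1)=\binom{3+2}{3}(3-3+1)=\binom{5}{3}=10$, consistent with the stated degree $10$. First I would record that, since $\cal P$ contains no point of $Z''$ (all of $Z'$, hence $Z''$, was removed from $\cal P$), Bézout does not force $\cal P$ into low-order systems for free; instead the argument must exploit the fact that $B\in\cal P$.

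The key computation is the following. For $B\in\cal P$, I want to bound $h_{j,B}=\dim(3;jB+Z'')-\vdim(3;jB+Z'')$ from below for $j=1,2,3$. The idea is that any element of $(3;jB+Z'')$, restricted to the plane $\cal P$, is a plane curve of degree $3$ with a point of multiplicity $j$ at $B$ and passing through $Z''\cap\cal P=\emptyset$; so the restriction lives in $(3;jB)$ on $\cal P\cong\PP^2$, which has dimension $\binom{3+2}{2}-\binom{j+1}{2}=10-\binom{j+1}{2}$ — this is positive for $j=1,2,3$. The point is to produce, inside $(3;jB+Z'')$, enough forms that do \emph{not} all vanish on $\cal P$, i.e.\ whose restriction to $\cal P$ spans the full space $(3;jB)$ on $\cal P$; equivalently, one shows the restriction map $(3;jB+Z'')\to(3;jB)_{\cal P}$ is surjective. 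Concretely, given any plane-cubic $c$ on $\cal P$ with an appropriate singularity at $B$, I would try to lift it to a spatial cubic through $Z''$ with the same multiplicity at $B$ — but that lift is exactly a spatial cubic, and there is no room, so instead the better route is: $h_{j,B}$ counts the drop between actual and expected dimension, and this drop occurs precisely because the $10$ conditions from $Z''$ become dependent once one also imposes $jB$ on $\cal P$; after extracting the plane $\cal P$ as a factor, the residual system is $(2;(j-?)B+Z'')$ — and here the combinatorics of $Z$ (the fact proved just above, that the lines $l_2,l_3,l_4$ on $\cal P$ are in the locus, and that $Z'$ spreads over the other $19$ planes in a controlled way) feeds in. The honest version is: I expect $h_{j,B}\ge (\text{number of points of }Z''\text{ forced to be redundant})$, and summing over $j=1,2,3$ should yield $h_B=\sum_j h_{j,B}>10=\deg F$, or else $h_B$ plus the already-established multiplicities of the $l_i$ exceeds $10$; either way $F\equiv0$ on this subsystem, forcing $\cal P$ into its locus.

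The main obstacle will be making the dimension count for $h_{j,B}$ genuinely rigorous without invoking a computer: one must verify that the system $(3;jB+Z'')$ really is deficient by the claimed amount, and this hinges on a careful description of how the $12$ points of $Z'$ distribute among the remaining $19$ planes of the Penrose subconfiguration (how many planes carry $4$, how many carry $5$ points, which lines $l_j$ they cut out on $\cal P$), together with how deleting two further points to form $Z''$ affects those numbers — exactly the "meticulous combinatorics of $Z$" the authors say they are omitting. I would structure the proof so that this combinatorial input is isolated in one lemma-like paragraph, after which Theorem \ref{thm:main} and a degree comparison with $\deg F=10$ close the argument mechanically, paralleling the $D4$ computation in Example \ref{ex:D4} and the type-by-type analysis used for Claim \ref{cla:prostewlocusie}.
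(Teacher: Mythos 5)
There is a genuine gap: you never use the one fact that makes this claim a one-line consequence of Claim \ref{cla:prostewlocusie}. The paper's argument is a pure degree count \emph{inside the plane} $\mathcal{P}$: the locus $\{F=0\}$ of $(3;3B+Z'')$ is a surface of degree $10$ (or all of $\PP^3$), so its intersection with $\mathcal{P}$ is either all of $\mathcal{P}$ or a plane curve of degree $10$; but Claim \ref{cla:prostewlocusie} puts all the lines $l_2,l_3,l_4$ lying on $\mathcal{P}$ into the locus, and the combinatorics of the Penrose subconfiguration gives $2+3+7=12$ such lines on each plane. A curve of degree $10$ cannot contain $12$ distinct lines, so $\mathcal{P}\subset\{F=0\}$. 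Your proposal mentions the lines $l_i$ only parenthetically and never performs this count; without it the claim does not follow.

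The route you do pursue --- bounding $h_{j,B}$ for a general $B\in\mathcal{P}$ via Theorem \ref{thm:main} so as to get $h_B\geq 1$ --- is not wrong in principle, but as you yourself concede it is not carried out: since $Z''\cap\mathcal{P}=\emptyset$, B\'ezout gives you nothing for free, and showing $h_{3,B}\geq 1$ for general $B\in\mathcal{P}$ amounts to exhibiting a cubic cone with vertex $B$ through the $10$ points of $Z''$, which is exactly the statement to be proved, so the argument is circular. You also slide between two different targets: proving $\mathcal{P}\subset\{F=0\}$ (the claim) and proving $F\equiv 0$ (the eventual corollary); the sentence ``either way $F\equiv 0$ on this subsystem'' conflates them. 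Finally, the ``meticulous combinatorics'' you defer to a lemma is not the distribution of $Z'$ among the other $19$ planes, but simply the count of lines $l_j$ on a single plane $\mathcal{P}$, which is all the paper needs here.
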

This follows from the fact that all the lines of types $2,3,4$ are in the locus of $(3, 3B+Z'')$.  There are  $12$ such lines on $\cal P$ ( the combinatorics is such that there are two 2 lines of type $ l_4$,  3 lines of type $l_3$ and 7 lines of type $l_2$) so $\cal P$ is in the locus of $(3, 3B+Z'')$ for \textit{any} choice of $Z''$, out of $Z'$. Thus, $\cal P$ is in the locus of $(3, 3B+Z')$.

Having  Claim \ref{cla:Pwlocusie} it is not difficult to see that:

\begin{corollary}
\begin{enumerate}
    \item   For $B\in \cal P$ there exists $(4, 4B+Z)$.
    \item For any 15 points from $Z$ and $F=\det M$, where $M$ is the interpolation matrix for $(4, 4B+Z)$, all 20 planes are in the set $\{F=0\}$.
\end{enumerate}  
\end{corollary}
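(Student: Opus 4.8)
The plan is to read both parts off Claim~\ref{cla:Pwlocusie}, using only the elementary remark that multiplying a cubic with a triple point at $B$ by a linear form cutting out the plane $\mathcal{P}$ produces a quartic with a fourfold point at $B$.

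For part (1) I would argue as follows. Claim~\ref{cla:Pwlocusie} and the paragraph after it already give, for every $B\in\mathcal{P}$, a nonzero cubic $C\in(3;3B+Z')$ (the point being that non-emptiness of $(3;3B+Z'')$ for every $10$-point subset $Z''\subset Z'$ forces non-emptiness of $(3;3B+Z')$, by the elementary fact that a codimension-$2$ subspace of $\K^{12}$ cannot meet every $2$-dimensional coordinate subspace non-trivially: were the evaluation map from the $10$-dimensional space of cubics with a triple point at $B$ to $\K^{12}$ injective, its image would be such a subspace). I would then set $G=\ell_{\mathcal{P}}\cdot C$, where $\{\ell_{\mathcal{P}}=0\}=\mathcal{P}$. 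Since $Z$ is the disjoint union $Z'\sqcup(Z\cap\mathcal{P})$, the form $G$ vanishes on all of $Z$, and because $B\in\mathcal{P}$ it has multiplicity at least $3+1=4$ at $B$; hence $G$ is a nonzero element of $(4;4B+Z)$, which is part (1).

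For part (2) I would use that $\mathcal{P}$ was an arbitrary one of the $20$ planes, so the construction of (1) applies to every plane $\mathcal{Q}$ of the configuration: for each such $\mathcal{Q}$ and each $B\in\mathcal{Q}$ there is a nonzero quartic vanishing on all of $Z$ with a fourfold point at $B$. Then, fixing any $15$-point subset $Y\subset Z$ and letting $F=\det M$ be the determinant of the associated square interpolation matrix for $(4;4B+Y)$, that quartic vanishes in particular on $Y\subseteq Z$, so $(4;4B+Y)\neq\emptyset$, and hence $F(B)=0$ by the description of $\{F=0\}$ as the locus where the system is non-empty (the Remark after~\eqref{eq:defM}). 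Letting $B$ range over $\mathcal{Q}$ gives $\mathcal{Q}\subseteq\{F=0\}$, and letting $\mathcal{Q}$ range over the $20$ planes gives part (2).

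I do not expect a real obstacle inside the corollary itself: the substantive work is already done in Claims~\ref{cla:prostewlocusie} and~\ref{cla:Pwlocusie}, namely the meticulous line--plane count showing that each of the $20$ planes carries $2$ lines of type $l_4$, $3$ of type $l_3$ and $7$ of type $l_2$, and hence $12$ lines that cannot lie on a degree-$10$ plane curve. The only step in the corollary that needs a moment's thought is the passage from all $10$-point subsets of $Z'$ to $Z'$ itself in part (1), and even that is the short linear-algebra observation indicated above (and is stated in the excerpt right after Claim~\ref{cla:Pwlocusie}); the remainder is just the ``plane times cubic'' and ``forget some of the points'' bookkeeping.
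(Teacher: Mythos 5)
Your argument is correct and is exactly the intended one: the paper leaves this corollary as ``not difficult to see'' from Claim~\ref{cla:Pwlocusie}, and your filling-in (the cubic $C\in(3;3B+Z')$ obtained from non-emptiness over all $10$-point subsets via the codimension-$2$ linear-algebra observation, then $\ell_{\mathcal P}\cdot C\in(4;4B+Z)$, then restricting to any $15$-point subset and invoking the determinantal description of the locus for each of the $20$ planes) is precisely the route the authors have in mind.
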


Thus, we finally have

\begin{corollary}
    The number of planes in the locus $\{F=0\},$ counted with multiplicities is at least 21. Thus, $F\equiv 0.$
\end{corollary}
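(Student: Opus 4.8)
The plan is to sharpen the preceding corollary — which already gives that for every $15$-point subset $\widetilde Z\subset Z$ each of the $20$ planes is a component of $\{F=0\}$, so that each of the $20$ corresponding linear forms divides $F$ — by producing one plane that occurs in $\{F=0\}$ with multiplicity at least $2$. Since $\deg F=\binom{6}{3}(4-4+1)=20$, this alone forces $F\equiv 0$. So fix one of the $20$ planes ${\cal P}$ and take $\widetilde Z$ to consist of all $8$ points of $Z\cap{\cal P}$ together with $7$ of the remaining $12$ points of $Z$; this is an admissible $15$-point set, and by the preceding corollary all $20$ planes still lie in $\{F=0\}$ for the associated $F$. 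We now bound, by Theorem \ref{thm:main}, the multiplicity at a general point $B\in{\cal P}$ of $\{F=0\}$. Since every $h_{j,B}$ is nonnegative and $\vdim(4;4B+\widetilde Z)=\binom{7}{3}-\binom{6}{3}-15=0$, it is enough to prove $h_{4,B}=\dim(4;4B+\widetilde Z)\geq 2$.

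To this end, recall that a quartic surface in $\PP^3$ with a point of multiplicity $4$ at $B$ is exactly a cone with vertex $B$, so projection from $B$ identifies the system $(4;4B+\widetilde Z)$ with the linear system of plane quartics through the $15$ image points. Because $B\in{\cal P}$, the $8$ points of $Z\cap{\cal P}$ are all sent onto one line $\ell\subset\PP^2$, namely the image of ${\cal P}$ (to $8$ distinct points of $\ell$ when $B$ is general), while the $7$ points lying off ${\cal P}$ are sent off $\ell$. By Bézout, a plane quartic meeting $\ell$ in more than $4$ points contains $\ell$; hence every quartic in the system is $\ell$ together with a plane cubic through the $7$ remaining image points. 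Since $7$ points impose at most $7$ conditions on the $10$-dimensional space of plane cubics, that residual system has projective dimension at least $2$, and therefore $\dim(4;4B+\widetilde Z)\geq 2$. Thus $h_{4,B}\geq 2$, so $h_B\geq 2$, and Theorem \ref{thm:main} gives $\operatorname{mult}_B\{F=0\}\geq 2$ for general $B\in{\cal P}$. Writing $F=L^a H$ with $L$ the linear form of ${\cal P}$ and $L\nmid H$, a general point $B\in{\cal P}$ has $H(B)\neq 0$, so $a=\operatorname{mult}_B\{F=0\}\geq 2$: the form $L$ divides $F$ at least twice.

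Combining this with the remaining $19$ planes, each contributing at least one linear factor of $F$, we conclude that $F$ is divisible by a product of linear forms of total degree $19+2=21$. As $\deg F=20<21$, this is impossible unless $F\equiv 0$, which is the assertion. The one delicate point is the inequality $h_{4,B}\geq 2$: one has to check that the $8$ coplanar points of $Z\cap{\cal P}$ really collapse onto the line $\ell$ under projection from a vertex $B\in{\cal P}$ — they do, because each line $\overline{BQ}$ with $Q\in{\cal P}$ stays inside ${\cal P}$ — and that Bézout then splits $\ell$ off from every quartic in the system; after that the estimate is a plain dimension count, needing no general-position hypothesis on the other $7$ points.
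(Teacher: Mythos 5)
Your local argument is fine, but it proves the wrong quantifier. You \emph{choose} the $15$-point subset $\widetilde Z$ (all $8$ points of $Z\cap\mathcal P$ plus $7$ others), and for that particular subset your projection-from-$B$ computation correctly gives $h_{4,B}\geq 2$ for general $B\in\mathcal P$, hence $\mathcal P^2\mid F$ and $\deg \geq 19+2=21>20=\deg F$, so $F\equiv 0$. However, the corollary (read in the context of the preceding one, where $F=\det M$ is defined ``for any $15$ points from $Z$'') must hold for \emph{every} $15$-point subset of $Z$: to conclude that the cone $(4;4B+Z)$ through all $20$ points exists, one needs the full $20\times 15$ condition matrix on the $15$-dimensional space of quartic cones with vertex $B$ to have rank $<15$, i.e.\ \emph{all} $\binom{20}{15}$ maximal minors must vanish identically, not just one cleverly chosen one. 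Proving $F\equiv 0$ for a single subset only yields a quartic cone through those $15$ points.

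The paper closes this gap combinatorially: it shows that no matter which $5$ points are removed from $Z$, the resulting $15$-point set cannot have exactly $6$ points on every one of the $20$ planes; since the incidence count forces the average to be $6$ (each point lies on $8$ planes, so $15\cdot 8/20=6$), some plane must retain at least $7$ points, and that plane then enters the locus with multiplicity at least $2$ by exactly the mechanism you use (seven collinear image points force the line off the projected quartic, leaving cubics through the remaining $8$ points, of affine dimension $\geq 2$). So your multiplicity bound is the right ingredient, but you must run it for an \emph{arbitrary} $15$-point subset, which requires the combinatorial fact that every such subset has a $(\geq 7)$-point plane — a statement your proof neither states nor establishes.
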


The proof of this corollary consists in analyzing the ways in which we can remove five points out of 20 points of $Z$.
There does not exist a subset of 15 points of $Z$ such that on each plane there are exactly six points. 
From this, it follows that there must exist a plane with 7 or more points; so it is in the locus at least twice.

\end{example}

\subsection*{Acknowledgements.}

Grzegorz Malara was supported by the National Science Centre (Poland), Sonata Grant \\ \textbf{2023/51/D/ST1/00118}.

\vspace{1cm}
\noindent
\textbf{Data Availability:} Data sharing not applicable to this article, as no datasets were generated or analyzed during
the current study.



\footnotesize
	\noindent
	Marcin Dumnicki, Halszka Tutaj-Gasi\'nska: Faculty of Mathematics and Computer Science, Jagiellonian University, Stanis{\l}awa {\L}ojasiewicza 6, 30-348 Kraków, Poland,\\
    \textit{E-mail address:} \texttt{marcin.dumnicki@uj.edu.pl, }\\
    \textit{E-mail address:} \texttt{halszka.tutajgasinska@gmail.com}\\

    Grzegorz Malara: Department of Mathematics, University of the National Education Commission, 	Podchor\c a\.zych 2, 30-084 Krak\'ow, Poland,\\
    \textit{E-mail address:}
    \texttt{grzegorzmalara@gmail.com}\\

\end{document}